\newcommand\mbb{\mathbb}
\newcommand\mcal{\mathcal}
\newcommand\ol{\overline}
\newcommand\sC{\mcal{C}}
\newcommand\sV{\mcal{V}}
\newcommand\C{\mbb{C}}
\renewcommand\P{\mbb{P}}
\newcommand\R{\mbb{R}}
\newcommand\Z{\mbb{Z}}
\DeclareMathOperator*\Div{Div}
\DeclareMathOperator*\GL{GL}
\DeclareMathOperator*\rank{rank}
\DeclareMathOperator*\trace{tr}
\renewcommand\epsilon{\varepsilon}
\renewcommand\ge{\geqslant}
\renewcommand\le{\leqslant}
\renewcommand\phi{\varphi}
\renewcommand\theta{\vartheta}
\theoremstyle{plain}
\newtheorem{Thm}{Theorem}
\newtheorem{Prop}[Thm]{Proposition}
\newtheorem{Cor}[Thm]{Corollary}
\newtheorem{Lemma}[Thm]{Lemma}
\newtheorem*{Thm*}{Theorem}
\newtheorem*{Prop*}{Proposition}
\newtheorem*{Cor*}{Corollary}
\newtheorem*{Lemma*}{Lemma}
\newtheorem*{Sublemma*}{Sublemma}
\newtheorem*{Conjecture*}{Conjecture}
\theoremstyle{definition}
\newtheorem{Constr}[Thm]{Construction}
\newtheorem{Def}[Thm]{Definition}
\newtheorem{Example}[Thm]{Example}
\newtheorem{Remark}[Thm]{Remark}
\newtheorem*{Constr*}{Construction}
\newtheorem*{Def*}{Definition}
\newtheorem*{Defs*}{Definitions}
\newtheorem*{Example*}{Example}
\newtheorem*{Examples*}{Examples}
\newtheorem*{Exercise*}{Exercise}
\newtheorem*{LemmaDef*}{Lemma and Definition}
\newtheorem*{Notation*}{Notation}
\newtheorem*{Problem*}{Problem}
\newtheorem*{Question*}{Question}
\newtheorem*{Remark*}{Remark}
\newtheorem*{Remarks*}{Remarks}
\newtheorem*{Warning*}{Warning}
\newtheorem*{Text*}{}
\numberwithin{equation}{section}
\numberwithin{Thm}{section}
\newcommand\varx{x}
\begin{document}
\title[Determinantal representations of hyperbolic
curves]{Determinantal representations of hyperbolic plane curves: An elementary approach}

\author{Daniel Plaumann}
\address{Universit\"at Konstanz, Germany} 
\email{Daniel.Plaumann@uni-konstanz.de}

\author{Cynthia Vinzant}
\address{University of Michigan, Ann Arbor, MI, USA}
\email{vinzant@umich.edu}

\maketitle

\begin{abstract} 
If a real symmetric matrix of linear forms is positive definite at 
some point, then its determinant defines a hyperbolic hypersurface. 
In 2007, Helton and Vinnikov proved a converse in three variables, 
namely that every hyperbolic curve in the projective plane has a definite real symmetric 
determinantal representation. The goal of this paper is to give a more concrete proof of a
slightly weaker statement.  Here we show that every hyperbolic 
plane curve has a definite determinantal representation with Hermitian matrices. 
We do this by relating the definiteness of a matrix to the real topology of its minors and 
extending a construction of Dixon from 1902. Like the Helton-Vinnikov theorem, this
implies that every hyperbolic region in the plane is defined by a linear matrix inequality. 
\end{abstract}

\section{Introduction}

Let $f$ be a real homogeneous polynomial of degree $d$ in $n+1$
variables $x_0,\hdots,x_n$. A \textbf{Hermitian determinantal
  representation} of $f$ is an expression
\begin{equation}\label{eq:intro}
 f \;\;=\;\;\det(x_0M_0+\cdots+x_nM_n),
\end{equation}
where $M_0, \hdots, M_n$ are Hermitian $d\times d$ matrices. The representation is \textbf{definite} if
there is a point $e\in\R^{n+1}$ for which the matrix $e_0M_0+\cdots+e_nM_n$ is positive
definite.

The existence of a definite Hermitian determinantal representation
imposes an immediate condition on the complex hypersurface $\sV_\C(f)$
defined by $f$.
Because the eigenvalues of a Hermitian matrix are real, 
every real line passing through $e$ meets
this hypersurface in only real points.  A~polynomial with this
property is called \textbf{hyperbolic} (with respect to $e$). 
For $n=2$,
we regard $\sV_\C(f)$ as a projective plane curve. Hyperbolicity is
reflected in the topology of the real points $\sV_\R(f)$.
When the curve $\sV_{\C}(f)$ is smooth, $f$ is hyperbolic if and only if $\sV_\R(f)$ consists of $\lfloor\frac{d}{2}\rfloor$ nested
ovals, and a pseudo-line if $d$ is odd. 

\begin{figure}[h]
 \includegraphics[width=3.5cm]{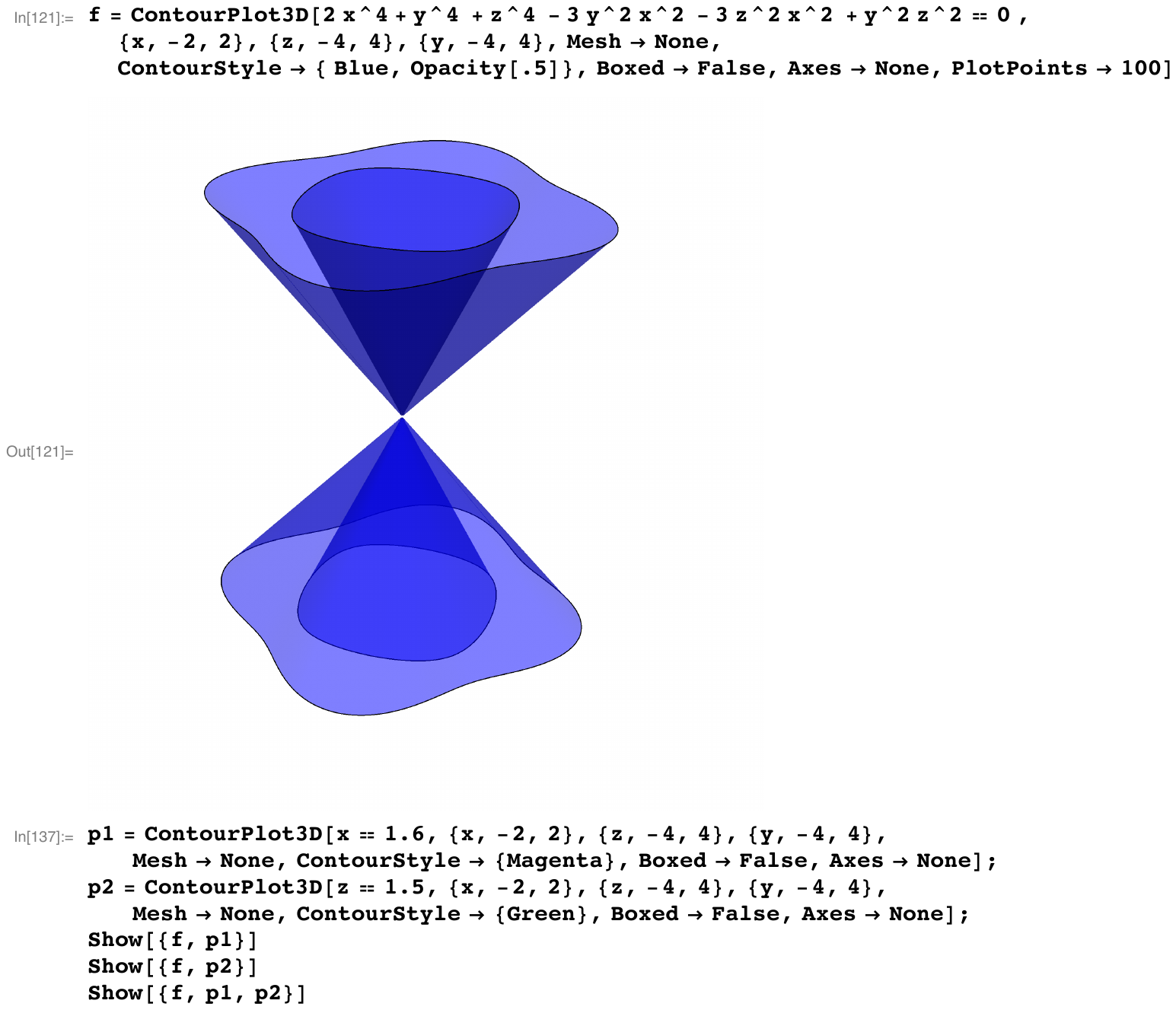} \quad \quad
  \includegraphics[width=4.0cm]{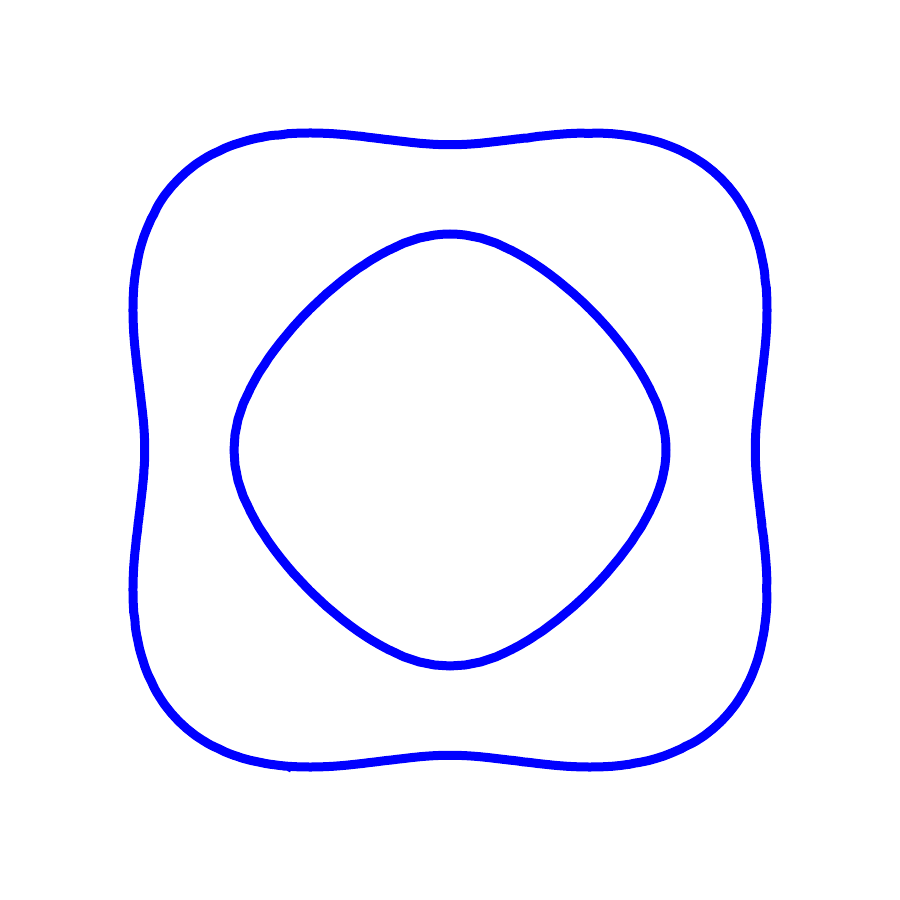}
\caption{A quartic hyperbolic hypersurface in $\R^3$ and $\P^2(\R)$.  }
\label{fig:cubic}
\end{figure}

The Helton-Vinnikov theorem \cite{HV}
(previously known as the Lax conjecture \cite{LPR}) says that for $n=2$, every
hyperbolic polynomial possesses a definite determinantal
representation \eqref{eq:intro} with real symmetric matrices. The proof is quite
involved and relies on earlier results of Vinnikov \cite{Vin} and Ball and
Vinnikov \cite{BV} on Riemann theta functions and the real structure
of the Jacobian of the curve
$\sV_{\C}(f)$. The latter had previously been studied by Gross and Harris \cite{GroHa}.

Determinantal hypersurfaces are a classical topic of complex algebraic
geometry (see Beauville \cite{Bea} and Dolgachev \cite{dolbook} for a modern presentation). In 1902,
Dixon~\cite{33.0140.04} proved that every smooth projective plane
curve admits a symmetric determinantal
representation. Hermitian and real symmetric representations of real
curves were studied in generality only later by Dubrovin \cite{MR734313} and Vinnikov
\cite{MR1024486, Vin}. 

Recently, questions in convex optimization (semidefinite and
hyperbolic programming) and operator theory have been the motivation
for more refined questions, especially concerning
the definiteness of determinantal representations. If
the polynomial $f$ has a definite determinantal representation 
$x_0M_0+x_1M_1+x_2M_2$ with real symmetric matrices, then 
the real surface defined by $f$ bounds the \emph{spectrahedron}
\begin{equation}\label{eq:spect}
\bigl\{a\in\R^3\::\: a_0M_0+a_1M_1+a_2M_2\text{ is positive semidefinite}\bigr\}.
\end{equation}
This convex set is the cone over the region enclosed by the inner oval of the
hyperbolic projective curve $\sV_\R(f)$. This realizes the convex region as the feasible set of
a semidefinite program, and we say that the region is represented by a
\emph{linear matrix inequality}. 
In this context, the Helton-Vinnikov theorem \cite{HV}
says that the convex region of any hyperbolic plane curve can be represented by a 
linear matrix inequality. 
The search for a suitable higher-dimensional analogue of this theorem is
still an intriguing open problem; 
see Vinnikov
\cite{VinReview} for an excellent review of both the history and
recent progress on this problem and Netzer and Thom \cite{NT} for further discussion.

In this paper, we give an elementary proof of the fact that every hyperbolic plane curve has a 
definite representation \eqref{eq:intro} by generalizing a classical construction 
due to Dixon \cite{33.0140.04}.  
The details of this construction and the core of the paper are in 
Section~\ref{sec:Dixon}, especially Theorem~\ref{thm:DetRepFromDixonProcess}.
Dixon's approach is to relate symmetric determinantal 
representations to families of contact curves. 
However, explicitly proving the 
existence of such curves is very difficult. 
Dixon refers to the theory of theta functions (specifically the
existence of a non-vanishing even theta characteristic). 
On the other hand, we can easily find families of curves that
correspond to Hermitian determinantal representations.
To construct a definite representation, we need only to start 
from curves that \emph{interlace} the given curve.  We can, for example, use
directional derivatives, which have
been used in the study of hyperbolicity cones already in the work of
G\r{a}rding \cite{MR0113978} and later by Renegar \cite{MR2198215},
Sanyal \cite{San}, and others.

Though we construct definite Hermitian (rather than symmetric)
matrices, the connections to convex optimization are not lost. As
discussed in Section~\ref{sec:spect}, the existence of definite Hermitian
representations still implies that the inner oval of any
hyperbolic curve is a spectrahedron.  Section~\ref{sec:hyperbolic}
contains basic facts about hyperbolicity and interlacing
polynomials. The connection between the interlacing property and the
definiteness of a Hermitian matrix of linear forms is explored in
Section~\ref{sec:HermDets}.  We provide a topological characterization
of the definiteness of such a matrix by the interlacing of its
determinant and comaximal minors (see
Theorem~\ref{thm:interlaceDetRep}).

Our overall goal is to give a new and self-contained proof of a known
result that has attracted the interest of mathematicians from many
different areas.
In this, we have tried to keep the proofs as algebraic and concrete as possible
and keep the use of topology and abstract algebraic geometry to a minimum.

\medskip
\textbf{Acknowledgements.} We would like to thank Victor Vinnikov,
David Speyer and Bernd Sturmfels for many helpful discussions. 
Daniel Plaumann was partially supported by the
research initiative \emph{Real Algebraic Geometry and Emerging
  Applications} at the University of Konstanz. Cynthia Vinzant was
partially supported by the National Science Foundation RTG grant number DMS 0943832.

\section{Hyperbolic polynomials and interlacers}\label{sec:hyperbolic}

Here we introduce the notions of hyperbolicity and interlacing
and build up some useful facts about these properties.

\begin{Def}
  A homogeneous polynomial $f\in \R[\varx]_d$ in variables
  $x=(x_0,\dots,x_n)$ is called \textbf{hyperbolic} with respect to a point
  $e\in \R^{n+1}$ if $f(e)\neq 0$ and for every $a\in \R^{n+1}$, all
  roots of the univariate polynomial $f(te+a)\in \R[t]$ are
  real. Note that since $f$ is homogeneous, this is equivalent to $f(e+ta)$ having only real roots. 
  \end{Def}

\begin{Def}
Suppose $f$ and $g$ are univariate polynomials of degrees $d$ and $d-1$ (respectively) with only real zeros. 
Denote the roots of $f$ by  $\alpha_1\le\cdots\le\alpha_d$ and the roots of $g$ by
  $\beta_1\le\cdots\le\beta_{d-1}$. We say that
  \textbf{$g$ interlaces $f$} if $\alpha_i\le\beta_i\le\alpha_{i+1}$ for all
  $i=1,\dots,d-1$. 
For multivariate polynomials, if $f \in \R[\varx]$ is hyperbolic with respect to $e\in \R^{n+1}$ and $g$ is
  homogeneous of degree $\deg(f)-1$, we say that $g$ \textbf{interlaces
    $f$ with respect to $e$} if $g(te+a)$ interlaces $f(te+a)$ in $\R[t]$ for every
  $a\in\R^{n+1}$. Note that this implies that $g$, too, is hyperbolic
  with respect to $e$. (See Figure~\ref{fig:cubic}).
\end{Def}

The most natural examples of interlacing polynomials come from derivatives. 
If $f(t)$ is a real univariate polynomial with only real roots, 
then all the roots of its derivative $f'(t)$ are real and interlace the roots of $f$. 
This easily extends to a multivariate polynomial $f(\varx)$ that is hyperbolic with respect to a point $e$. 
Since the roots of $\frac{\partial}{\partial t}f(te+a)$ interlace
those of $f(te+a)$ for all $a\in \R^{n+1}$, we see that 
\begin{equation}\label{eq:RenDeriv}
 D_ef \;\; = \;\; \sum_{i=0}^n e_i \frac{\partial f}{\partial x_i}
 \end{equation}
interlaces $f$. This was first noted by G\r{a}rding \cite{MR0113978} and used extensively in \cite{MR2198215}. 
 \begin{figure}[h]
 \includegraphics[height=4.7cm]{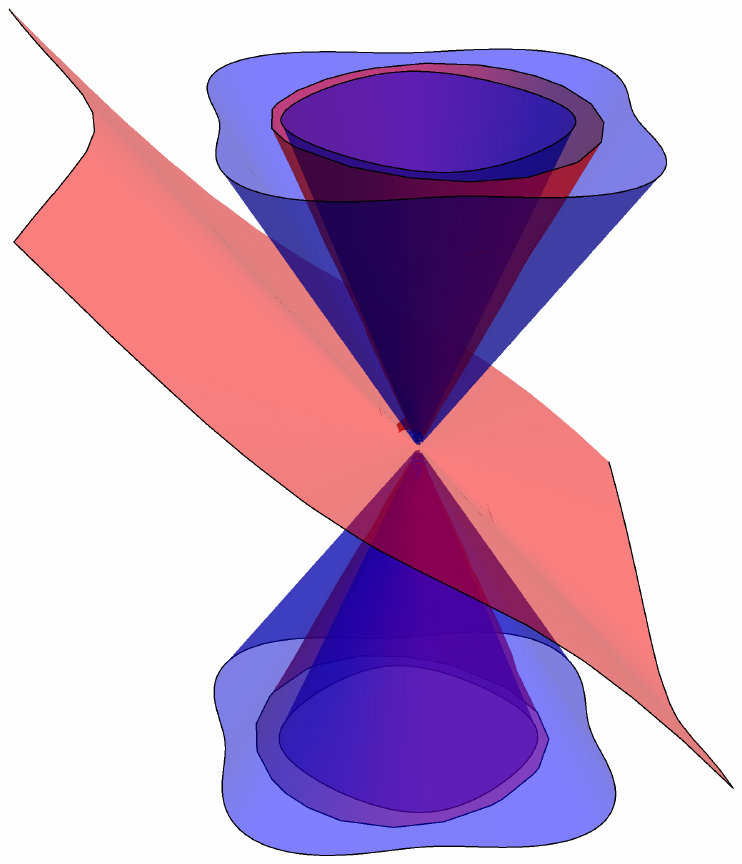} \quad \quad \quad
  \includegraphics[height=4.7cm]{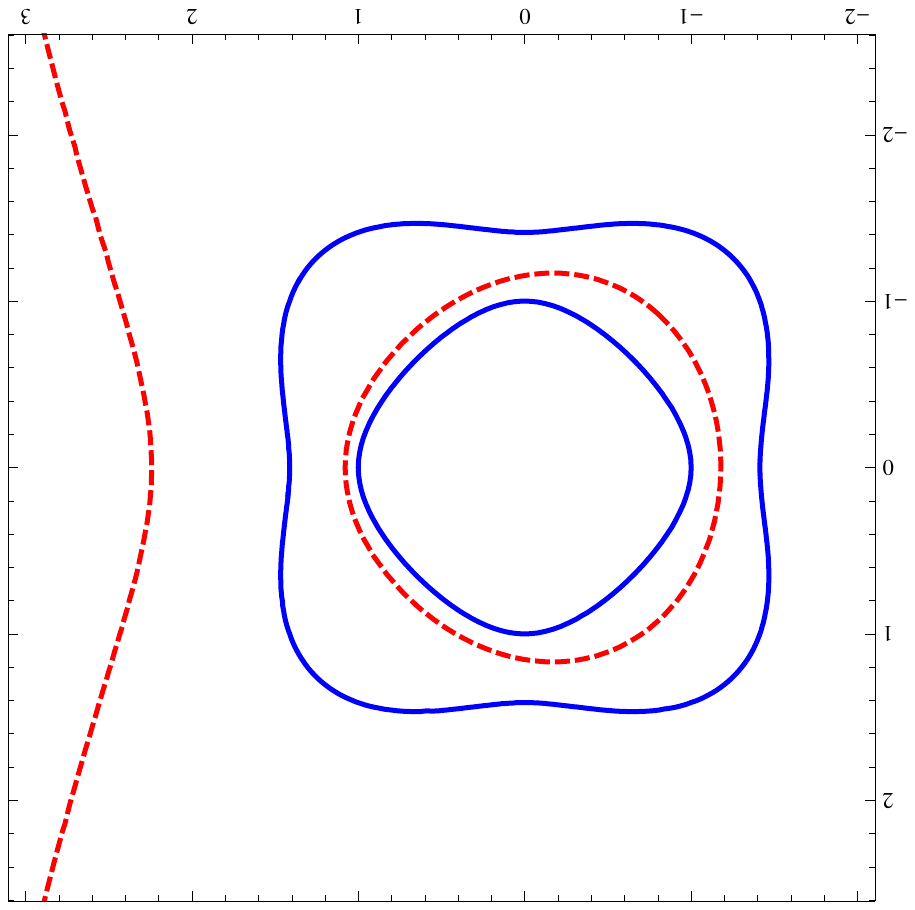} 
\caption{A cubic interlacing a quartic in $\R^3$ and $\P^2(\R)$. }
\label{fig:cubic}
\end{figure}

Note that if $f$ and $g$ are coprime and $g$ interlaces $f$ with
respect to $e$, then the roots of $f(te+a)$ are distinct from the
roots of $g(te+a)$ for points $a$ in an open dense subset of
$\R^{n+1}$. In particular, this is true of $g=D_ef$ when $f$ is
square-free.

We now come to two useful results on interlacing polynomials. The first characterizes the polynomials
that interlace $f$ by a non-negativity condition. The second characterizes the intersection 
points of $\sV_{\R}(f)$ and its directional derivative $\sV_{\R}(D_ef)$.

\begin{Lemma} \label{lem:nonnegInterlacers}
Suppose that $f \in \R[\varx]_d$ is irreducible and hyperbolic with
respect to $e$. Fix $g, h$ in $ \R[\varx]_{d-1}$ where $g$ interlaces $f$ with respect to $e$. 
Then $h$ interlaces $f$ with respect to $e$ if and only if 
$g\cdot h$ 
is nonnegative  on $\sV_{\R}(f)$ or nonpositive on $\sV_{\R}(f)$. 
\end{Lemma}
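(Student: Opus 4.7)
The plan is to reduce everything to univariate sign analysis on real lines through $e$. For any $a\in\R^{n+1}$ write $F(t)=f(te+a)$, $G(t)=g(te+a)$ and $H(t)=h(te+a)$. Hyperbolicity of $f$ gives $F(t)=f(e)\prod_{i=1}^{d}(t-\gamma_i)$ with real roots $\gamma_1\le\dots\le\gamma_d$, and the fact that $g$ interlaces $f$ together with $g(e)\neq 0$ gives $G(t)=g(e)\prod_{j=1}^{d-1}(t-\delta_j)$ with $\gamma_i\le\delta_i\le\gamma_{i+1}$. Because $f$ is irreducible, for $a$ in a dense open subset of $\R^{n+1}$ the $\gamma_i$ are simple, and in that case the interlacing inequalities for the $\delta_j$ become strict.

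The key computation is $\sgn G(\gamma_i)=\sgn g(e)\cdot(-1)^{d-i}$, arising from the $d-i$ strictly negative factors $\gamma_i-\delta_j$ with $j\ge i$ and the $i-1$ strictly positive ones with $j<i$. For the forward direction, if $h$ also interlaces $f$ the same argument yields $\sgn H(\gamma_i)=\sgn h(e)\cdot(-1)^{d-i}$, so $G(\gamma_i)H(\gamma_i)$ has the constant strict sign $\sgn(g(e)h(e))$ across all $i$ and all generic $a$. Every real point $p$ of $\sV_\R(f)$ lies on a line through $e$ (take $a=p$ and $t=0$), so $g(p)h(p)=G(0)H(0)$ is the value of $GH$ at one of the roots of $F$; taking limits from nearby generic $a'$, for which strict alternation holds, gives $\sgn(g(e)h(e))\cdot g(p)h(p)\ge 0$, which is the asserted sign condition on $\sV_\R(f)$.

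Conversely, suppose $h$ does not interlace $f$. Pick an $a$ for which the interlacing condition fails strictly---either $H$ has a non-real root, or its real roots misalign with the $\gamma_i$---and, since strict failure is an open condition on $a$, perturb to a generic $a$ with $\gamma_i$ simple. Let $\eta_1,\dots,\eta_k$ be the real roots of $H$; complex-conjugate pairs contribute the strictly positive factors $|\gamma_i-\eta_j|^2$ to $H(\gamma_i)$, so $\sgn H(\gamma_i)=\sgn h(e)\cdot(-1)^{N_i}$ with $N_i=\#\{j:\eta_j>\gamma_i\}$. For $G(\gamma_i)H(\gamma_i)$ to retain the constant sign $\sgn(g(e)h(e))$ across $\gamma_1,\dots,\gamma_d$ the parity of $N_i$ must flip at every step, i.e.\ $H$ has an odd number of real roots in each of the $d-1$ gaps $(\gamma_i,\gamma_{i+1})$; combining this with the bound $k\le d-1$ forces exactly one real root per gap and no real roots outside $[\gamma_1,\gamma_d]$, which is precisely interlacing. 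Hence non-interlacing produces indices $i\ne i'$ at which $G(\gamma_i)H(\gamma_i)$ and $G(\gamma_{i'})H(\gamma_{i'})$ have opposite strict signs, giving two real points of $\sV_\R(f)$ on which $gh$ takes opposite strict signs.

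The main obstacle I expect is the converse direction: one must verify that a single witness of non-interlacing---possibly one for which $H$ is not even hyperbolic---propagates to a strict sign change of $gh$ at two explicit points of $\sV_\R(f)$. The tidy formulation is the parity-of-$N_i$ identity above, which handles the hyperbolic and non-hyperbolic cases uniformly by absorbing the complex roots of $H$ into positive factors and reducing the whole question to counting real roots in the gaps between the $\gamma_i$.
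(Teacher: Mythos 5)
Your proof is correct and follows the same basic strategy as the paper: restrict to generic lines $te+a$ through $e$ (so that $F$ has simple roots, distinct from those of $GH$), use strict interlacing of $g$ to pin down the sign of $G$ at each root of $F$, and count roots of $H$ in the $d-1$ gaps between consecutive roots of $F$. The one genuine difference is in the direction (sign condition $\Rightarrow$ interlacing): you argue by contrapositive via an explicit parity count $N_i = \#\{j : \eta_j > \gamma_i\}$, which obliges you to first show that a failure of interlacing along one line persists to nearby generic lines; this extra step is fine (weak interlacing is a closed condition in $a$, so its complement is open and meets the dense generic locus), but the paper sidesteps it by arguing directly: since $G$ changes sign across each gap, constancy of the sign of $GH$ at the roots of $F$ forces $H$ to change sign across each gap too, and the degree bound does the rest. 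Your parity bookkeeping and the paper's sign-change argument are really the same computation, so this is a reformulation rather than a new route. One small point to tighten: your formula $\sgn H(\gamma_i) = \sgn h(e)\cdot(-1)^{N_i}$ tacitly assumes $h(e)\ne 0$, which need not hold in the contrapositive direction. When $h(e)=0$ the leading coefficient of $H(t)$ depends on $a$ and has degree at most $d-2$, so one should phrase the parity identity using the actual leading coefficient of $H$; the contradiction still goes through, since a polynomial of degree $\le d-2$ cannot have a real root in each of the $d-1$ gaps.
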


\begin{proof} 
To prove this statement, it suffices to restrict to the line $x = te+a$ for generic $a\in \R^{n+1}$. 
In particular, we may assume that the roots of $f(te+a)$ are
 distinct from each other and from the roots of $g(te+a)\cdot h(te+a)$. 
 
Suppose that $g\cdot h$ is nonnegative on $\mathcal{V}_\R(f)$. 
  By the genericity assumption, the product $g(te+a)h(te+a)$ is positive on all of
  the roots of $f(te+a)$. Between consecutive roots of $f(te+a)$, the polynomial
  $g(te+a)$ has a single root and thus changes sign. 
  For the product $g\cdot h$ to be positive on these roots, 
 $h(te+a)$ must also change sign and have a root between
  each pair of consecutive roots of $f(te+a)$. Hence $h$ interlaces
  $f$ with respect to $e$.
  
Conversely, suppose that $g$ and $h$ both interlace $f$. Between any consecutive roots of $f(te+a)$,
both $g(te+a)$ and $h(te+a)$ each have exactly one root, and their product has exactly two. 
It follows that $g(te+a)h(te+a)$ has the same sign 
on all the roots of $f(te+a)$. Taking $t \rightarrow \infty$ shows this sign to be the sign 
of $g(e)h(e)$, independent of the choice of $a$. Hence $g\cdot h$ has the same sign on every point of $\sV_{\R}(f)$.   
\end{proof}

 \begin{Lemma}\label{lemma:NonTouchingOvals}
   Let $f \in \R[\varx]$ be hyperbolic with respect to $e$. Every real intersection
   point of $\sV_{\C}(f)$ and $\sV_{\C}(D_ef)$ is a singular point of $\sV_{\C}(f)$.
 \end{Lemma}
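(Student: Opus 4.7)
I would prove this by showing that $\nabla f(p) = 0$ at any real common zero $p$ of $f$ and $D_ef$. Since $f(e) \neq 0$ by hyperbolicity we have $p \neq e$ in $\P^n(\R)$, so coordinates may be chosen with $e = (1,0,\ldots,0)$ and $p = (0,1,0,\ldots,0)$. Then $D_ef = \partial f/\partial x_0$, the hypotheses read $f(p) = (\partial f/\partial x_0)(p) = 0$, and Euler's identity $\sum_j x_j\,\partial_j f = df$ evaluated at $p$ forces $(\partial f/\partial x_1)(p) = 0$. It therefore suffices to show $(\partial f/\partial x_i)(p) = 0$ for each $i \in \{2,\ldots,n\}$.

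Restricting $f$ to the line through $e$ and $p$ gives the univariate polynomial $P_0(t) = f(t,1,0,\ldots,0)$, whose leading coefficient is $f(e)\neq 0$ and which vanishes to some finite order $m \geq 2$ at $t=0$; write $P_0(t) = \beta t^m + O(t^{m+1})$ with $\beta\neq 0$. Fix $i\ge 2$ and introduce the perturbed family
\[
 P(t,\epsilon) \;=\; f(t,1,0,\ldots,\epsilon,\ldots,0),
\]
with $\epsilon$ in slot $i$. For every $\epsilon\in\R$ we have $P(\cdot,\epsilon) = f(te + a_\epsilon)$ with $a_\epsilon = (0,1,0,\ldots,\epsilon,\ldots,0)$, so hyperbolicity of $f$ with respect to $e$ forces all $d$ roots of $P(\cdot,\epsilon)$ in $t$ to be real. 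In particular, the $m$ roots that cluster near $t=0$ as $\epsilon \to 0$ must remain real for every small $\epsilon$.

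A Taylor expansion of $P$ at $(0,0)$, under the weights $1$ for $t$ and $m$ for $\epsilon$, gives
\[
 P(t,\epsilon) \;=\; \beta t^m \,+\, \alpha\epsilon \,+\, (\text{terms of strictly higher weighted order}), \qquad \alpha := (\partial f/\partial x_i)(p).
\]
Substituting $t = \tau\epsilon^{1/m}$ and dividing by $\epsilon$ yields $\beta\tau^m + \alpha + O(\epsilon^{1/m})$, so a Newton-polygon/Puiseux analysis shows that the $m$ small roots of $P(\cdot,\epsilon)$ are asymptotic to the complex $m$-th roots of $-\alpha\epsilon/\beta$. If $\alpha\neq 0$, the equation $\tau^m = c$ with $c = -\alpha\epsilon/\beta\neq 0$ real has all $m$ roots real only in the case $m=2$, $c>0$: for $m\geq 3$ at most two of the $m$ roots are real, and for $m=2$ one sign of $\epsilon$ makes both roots non-real. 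In every case hyperbolicity is violated, so $\alpha = 0$. Running this over all $i\geq 2$ yields $\nabla f(p) = 0$, and thus $p$ is singular on $\sV_\C(f)$.

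The main obstacle is the Newton-polygon step: turning the leading-order asymptotic into a rigorous statement about the $m$ small roots of $P(\cdot,\epsilon)$. The cleanest approach is to invoke Weierstrass preparation at $(0,0)$, reducing $P$ to a monic polynomial of degree $m$ in $t$ whose coefficients all vanish at $\epsilon=0$, whose Newton polygon has a single lowest segment from $(0,1)$ to $(m,0)$ of slope $-1/m$ as soon as $\alpha\neq 0$; the associated polynomial is exactly $\beta\tau^m + \alpha$, so the standard theory gives the Puiseux expansions one needs.
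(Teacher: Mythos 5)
Your proof is correct, but it takes a genuinely different route from the paper's. Both arguments share the same starting observations: from $f(p)=D_ef(p)=0$ one sees that $f(te+p)$ has a zero of order $m\ge 2$ at $t=0$ (and, in your coordinates, Euler's identity kills $\partial_1 f(p)$). From there the paper argues topologically: assuming $\nabla f(p)\neq 0$, it restricts to the affine plane $H=p+\mathrm{span}\{e,\nabla f(p)\}$, tracks the two real branches $q_k,q_{k+1}$ of $\sV_\R(f)\cap H$ passing through $p$ along the transverse direction $\nabla f(p)$, and contradicts the implicit function theorem, which allows only two local branches at a smooth real point. You instead go after $\nabla f(p)=0$ directly, perturbing in each remaining coordinate direction and showing via a Newton polygon/Puiseux analysis that if some $\partial_i f(p)=\alpha\neq 0$, then for a suitable sign of $\epsilon$ some of the $m$ small roots of $f(te+p+\epsilon e_i)$ acquire nonzero imaginary part, violating hyperbolicity. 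The weighted Taylor expansion, the identification of the unique edge from $(0,1)$ to $(m,0)$, the root asymptotics $\tau^m\sim -\alpha\epsilon/\beta$, and the case analysis in $m$ and $\mathrm{sgn}(\epsilon)$ are all correct. Your approach trades the paper's soft topological argument (implicit function theorem plus a component count) for a harder-edged local root analysis (Weierstrass preparation and Puiseux). The paper's proof has the virtue of needing no machinery beyond the implicit function theorem and is shorter; yours has the virtue of producing the quantitative conclusion $\nabla f(p)=0$ directly rather than by contradiction, and it isolates exactly which perturbations force complex roots. Both are legitimate; the Puiseux step you flag as the technical crux is indeed standard once the Newton polygon has the single edge you identify, so there is no gap.
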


 \begin{proof}
Suppose for the sake of contradiction that 
 a point $p$ lies in $\sV_{\R}(f)\cap \sV_{\R}(D_ef)$ and that $p$ is nonsingular in $\sV_{\C}(f)$. 
 Then the vector $\nabla f(p)$ is nonzero and orthogonal to $e$.  Now consider the affine plane 
 $H  = p+{\rm span}\{e,\nabla f(p)\}$. The restriction of $\sV_{\C}(f)$ to $H$ is a plane curve that is 
 still nonsingular at $p$. 
 
 For any $u\in H$ let $\alpha_1(u) \leq \hdots \leq \alpha_d(u)$ be the roots of $f(te+u)$ 
and for each $j=1,\hdots, d$, let $q_j(u)$ denote the point $\alpha_j(u)e+u$ in $\sV_{\R}(f)\cap H$. 
 Because $p$ lies in the intersection $\sV_{\R}(f)\cap \sV_{\R}(D_ef)$, the polynomial 
$f(te+p)$ has a double root at $t=0$.  So for some $k \in\{1,\hdots, d-1\}$, the points 
$q_k(p)$, $q_{k+1}(p)$, and $p$ are all equal.

 \begin{figure}[b]
 \includegraphics[width=2in]{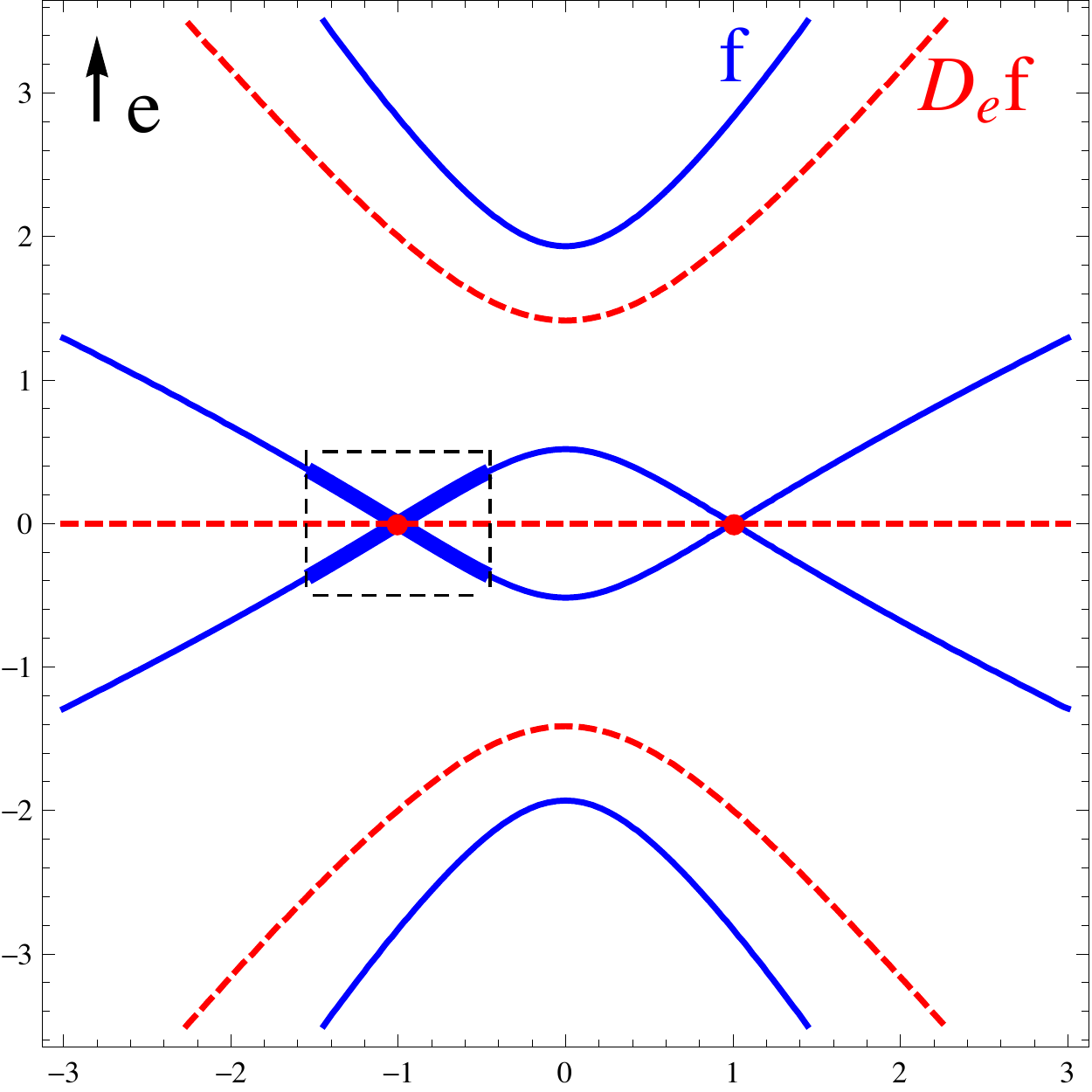} \quad \quad \quad
  \includegraphics[width=2in]{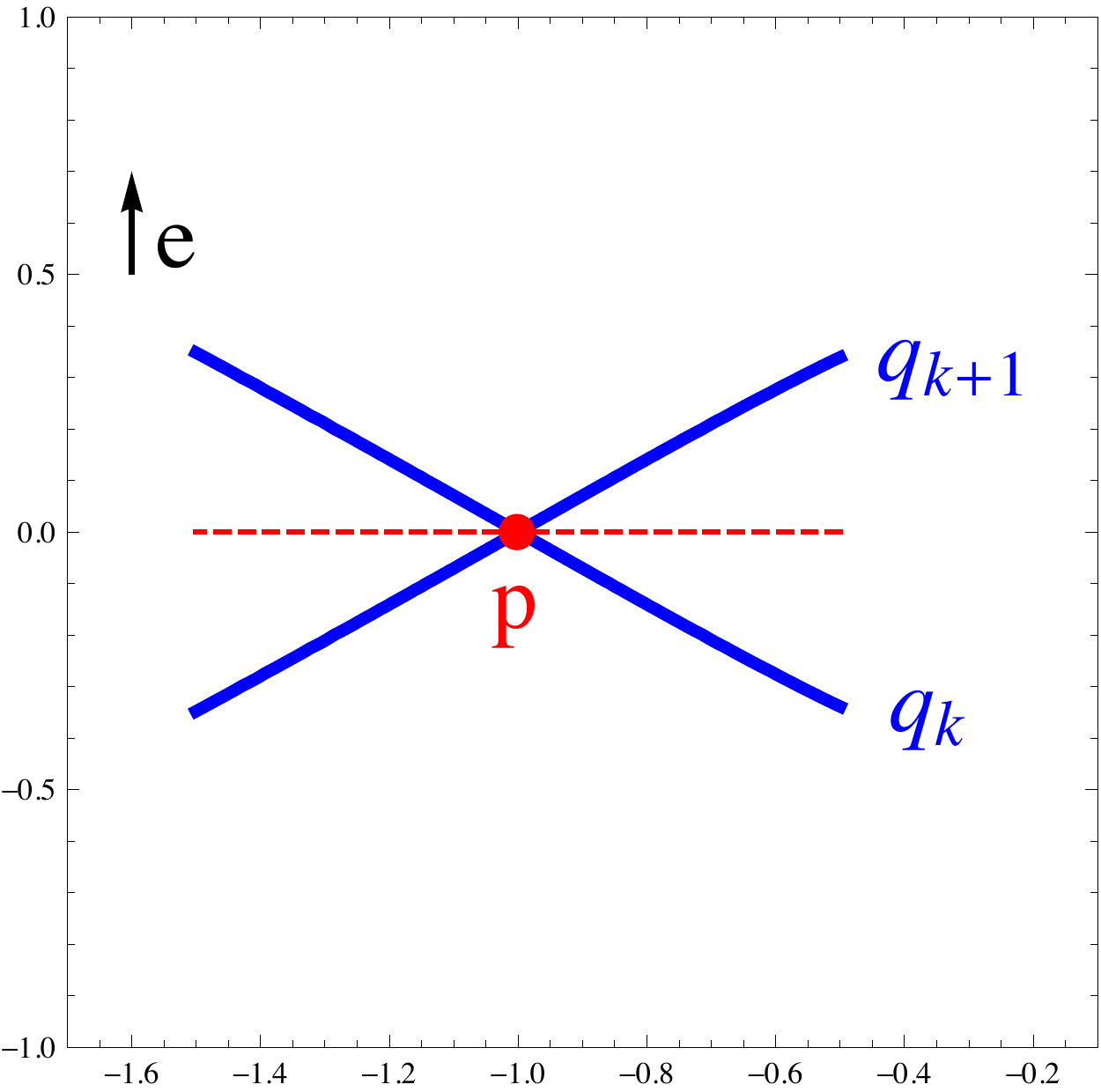} 
\caption{A singular hyperbolic curve and close up from Lemma~\ref{lemma:NonTouchingOvals}.}
\label{fig:Lemma}
\end{figure}

For all but finitely many points $u$ in the line $p+{\rm span}\{\nabla f(p)\}$, 
 the polynomial $f(te+u)$ has distinct roots. Thus we can take $U$ to be a real open 
 neighborhood of $p$ in this line such that for all $u \in U\backslash \{p\}$ the roots 
 of $f(te+u)$ are distinct. Then the maps $u \mapsto q_k(u)$ and $u\mapsto q_{k+1}(u)$ 
 give homeomorphisms between $U$ and different subsets of a neighborhood of $p$ in $\sV_{\R}(f)\cap H$.

Since $p$ is a nonsingular point of $\sV_\C(f)\cap H$,
it has an open neighborhood in $\sV_{\R}(f)\cap H$ that is homeomorphic to a line segment, 
by the implicit function theorem. 
However, by the above argument, removing the point 
$p$ from this open neighborhood results in 
at least four connected components: two in 
 $q_k(U\backslash p)$ and two in $q_{k+1}(U \backslash p)$.

This contradiction shows that $p$ must be a singular point of $\sV_{\C}(f)$. So every real intersection point of
the varieties of $f$ and $D_ef$ lies in the singular locus of $\sV_{\C}(f)$. 
 \end{proof}

\section{Interlacing and Definiteness}\label{sec:HermDets}

All eigenvalues of a Hermitian matrix are real. 
On the space of Hermitian matrices, we consider $\det(X+iY)$ as a polynomial in $\R[X_{ij}, Y_{ij}:i\leq j \in [d]]$, 
where $X = (X_{ij})$ and $Y=(Y_{ij})$ are symmetric and skew-symmetric matrices of variables, respectively.
This polynomial is hyperbolic with respect to the identity matrix.
In fact, it is hyperbolic with respect to any positive definite matrix. 
Hence, for any positive semidefinite matrix $E\neq 0$, the polynomial 
\begin{equation}\label{eq:RenMatrix}
D_E(\det(X+iY)) \;\; = \;\; \trace\!\left(E\cdot (X+iY)^{{\rm adj}}\right)
\end{equation}
interlaces $\det(X+iY)$. 
This holds true when we restrict to linear subspaces. 
For Hermitian $d\times d$  matrices $M_0, \hdots, M_n$ and variables $x=(x_0,\hdots, x_n)$, denote
\[ M(\varx) \;=\; \sum_{j=0}^n x_j M_j.\]
If $M(e)$ is positive definite for some $e\in \R^{n+1}$, 
then the polynomial $ \det(M(x))$ is hyperbolic with respect to the point $e$. 
If $E$ as above has rank one, say $E=\ol{\lambda}\lambda^T$ where $\lambda \in \C^d$,
and we restrict to the subspace of Hermitian matrices spanned by $M_0, \hdots, M_n$, 
then the polynomial \eqref{eq:RenMatrix} has the form 
$\lambda^T M^{\rm adj} \ol{\lambda}$.
In Section~\ref{sec:Dixon} we use these polynomials to \textit{reconstruct} the matrix $M$.

\begin{Def}\label{def:CM}
  Let $M$ be a $d\times d$ Hermitian matrix of linear forms. With it we associate 
 a family of polynomials,  
  \[
\sC(M)\;=\;\bigl\{\lambda^T M^{\rm adj} \ol\lambda\;\;|\;\;\lambda\in\C^d\setminus\{0\}\bigr\},
\]
and call $\sC(M)$ the \textbf{system of hypersurfaces associated with $M$}.
\end{Def}

\noindent Here is a useful identity on these hypersurfaces that 
goes back to the work of Hesse in 1855 \cite{HesseDets}.
For completeness, we include the short proof. 
\begin{Prop} \label{prop:Hesse}
Let $M$ be a Hermitian matrix of linear forms. For any  $\lambda, \mu \in \C^d$, 
\begin{equation} \label{eq:Hesse}
(\lambda^T M^{\rm adj}\overline{\lambda})(\mu^T M^{\rm adj}\overline{\mu})
\;-\; (\lambda^T M^{\rm adj}\overline{\mu})(\mu^T M^{\rm adj}\overline{\lambda}) 
\end{equation}
is contained in the ideal $( \det(M) )$.  In particular, 
the polynomial $(\lambda^T M^{\rm adj}\overline{\lambda})(\mu^T M^{\rm adj}\overline{\mu})$
is nonnegative on the real variety of $\det(M)$.
\end{Prop}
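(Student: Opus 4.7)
The plan is to deduce \eqref{eq:Hesse} from a Schur-complement computation and then read off the nonnegativity from the Hermitian structure at real points.

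First I would recognise the expression in \eqref{eq:Hesse} as the $2\times 2$ determinant
\[
\det\bigl(\Lambda^T M^{\rm adj}\, \ol\Lambda\bigr),
\]
where $\Lambda\in\C^{d\times 2}$ is the matrix with columns $\lambda$ and $\mu$. To show this lies in $(\det(M))$, I would form the block matrix
\[
N \;=\; \begin{pmatrix} M & \ol\Lambda \\ -\Lambda^T & 0 \end{pmatrix}
\]
of size $(d{+}2)\times(d{+}2)$, whose entries are polynomials, so that $\det(N)$ is itself a polynomial. Over the rational function field (the only nontrivial case being $\det(M)\not\equiv 0$), computing $\det(N)$ by Schur complement against the $M$-block gives
\[
\det(N) \;=\; \det(M)\cdot\det(\Lambda^T M^{-1}\ol\Lambda) \;=\; \det(M)^{-1}\det(\Lambda^T M^{\rm adj}\ol\Lambda),
\]
using $M^{-1} = \det(M)^{-1}M^{\rm adj}$. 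Rearranging yields the polynomial identity $\det(\Lambda^T M^{\rm adj}\ol\Lambda) = \det(M)\cdot\det(N)$, which proves the first assertion.

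For the nonnegativity claim I would use that $M(x)$ is Hermitian at every real $x\in\R^{n+1}$, hence so is $M^{\rm adj}(x)$ (extending $M^{\rm adj}=\det(M)M^{-1}$ by continuity through the zero locus of $\det(M)$, or equivalently by noting that the entries of the adjugate are polynomial expressions in those of a Hermitian matrix). A short calculation shows that for any Hermitian $H$ and $\lambda,\mu\in\C^d$,
\[
\overline{\lambda^T H\,\ol\mu}\;=\;\ol\lambda^T\,\ol H\,\mu\;=\;\ol\lambda^T H^T\mu\;=\;\mu^T H\,\ol\lambda,
\]
where I have used $\ol H=H^T$ for Hermitian $H$ and the fact that a scalar equals its transpose. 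Therefore $(\lambda^T M^{\rm adj}\ol\mu)(\mu^T M^{\rm adj}\ol\lambda) = \bigl|\lambda^T M^{\rm adj}\ol\mu\bigr|^2\ge 0$ at every real point. On $\sV_\R(\det(M))$, the identity \eqref{eq:Hesse} forces $(\lambda^T M^{\rm adj}\ol\lambda)(\mu^T M^{\rm adj}\ol\mu)$ to coincide with this nonnegative quantity, giving the claim.

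The main obstacle is really just bookkeeping: arranging the Schur complement so that the right power of $\det(M)$ appears and no spurious sign creeps in, and tracking the Hermitian conjugation identity carefully. No deeper input is needed.
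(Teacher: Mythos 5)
Your proof is correct, and the first half takes a genuinely different route from the paper's. To establish ideal membership, the paper works with a generic square matrix $X=(X_{ij})$ of independent variables: the identity $X\cdot X^{\rm adj}=\det(X)I$ shows $X^{\rm adj}$ has rank at most one at every point of $\sV_\C(\det X)$, so the $2\times 2$ determinant $\det\bigl(\begin{smallmatrix}\lambda&\mu\end{smallmatrix}\bigr)^T X^{\rm adj}\bigl(\begin{smallmatrix}\ol\lambda&\ol\mu\end{smallmatrix}\bigr)$ vanishes on that hypersurface, and irreducibility of $\det(X)$ upgrades this vanishing to membership in the prime ideal $(\det X)$ before specializing $X=M$. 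Your Schur-complement calculation instead exhibits an explicit cofactor: the polynomial identity $\det(\Lambda^T M^{\rm adj}\ol\Lambda)=\det(M)\cdot\det(N)$ is a constructive certificate of divisibility (and is essentially Jacobi's classical formula expressing $2\times 2$ minors of the adjugate as $\det(M)$ times complementary minors of $M$). What you gain is that no appeal to irreducibility or the Nullstellensatz is needed; what you should tighten is the sentence dismissing $\det(M)\equiv 0$ --- there the Schur complement is unavailable, but then $M$ has rank at most $d-1$ over $\C(x)$, hence $M^{\rm adj}$ has rank at most one and $\det(\Lambda^T M^{\rm adj}\ol\Lambda)$ is identically zero; alternatively, prove the identity once for a generic Hermitian matrix and then specialize, exactly as the paper does for $X$. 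The nonnegativity argument in your second half coincides with the paper's: since $M^{\rm adj}$ is Hermitian (as a matrix of polynomials with the coefficient-conjugation symmetry), $\mu^T M^{\rm adj}\ol\lambda$ is the conjugate of $\lambda^T M^{\rm adj}\ol\mu$, so the subtracted term in \eqref{eq:Hesse} is a modulus squared on $\R^{n+1}$, and on $\sV_\R(\det M)$ the remaining term equals it.
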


\begin{proof}
Consider a general square matrix of variables $X = (X_{ij})$. 
At a generic point in $\sV_{\C}(\det(X))$, the matrix $X$ has 
corank one.  The identity $X \cdot X^{\rm adj} = \det(X)I$
implies that $X^{\rm adj}$ has rank one at such a point. 
In particular, the $2\times 2$ matrix
$\begin{pmatrix} \lambda & \mu \end{pmatrix}^T X^{\rm adj} \begin{pmatrix} \ol\lambda & \ol\mu \end{pmatrix}$
has rank at most one on $\sV_{\C}(\det(X))$. 
Since the polynomial $\det(X)$ is irreducible, the determinant of this $2\times 2$ matrix 
lies in the ideal $( \det(X) )$.
Restricting to $X=M$ gives the desired identity. 

For the claim of nonnegativity, note that 
$(\mu^T M^{\rm adj}\overline{\lambda}) = \overline{(\lambda^T M^{\rm adj}\overline{\mu})}$. 
So the polynomial $(\lambda^T M^{\rm adj}\overline{\lambda})(\mu^T M^{\rm adj}\overline{\mu})$ 
is equal to a polynomial times its conjugate modulo the ideal $( \det(M))$. 
This shows it to be nonnegative on $\sV_{\R}(\det(M))$. 
\end{proof}

This simple identity allows us to determine whether or not a determinantal representation $M$
is definite by examining the real topology of the polynomials in $\sC(M)$. 

\begin{Thm}\label{thm:interlaceDetRep}
    Let $f\in\R[\varx]_d$ be irreducible and hyperbolic with respect to $e$,
    with a Hermitian determinantal representation $f=\det(M)$. 
  The following are equivalent:
  \begin{enumerate}
  \item Some polynomial in $\sC(M)$  interlaces $f$ with respect to $e$.
    \item Every polynomial in $\sC(M)$ interlaces $f$ with respect to $e$.
    \item The matrix $M(e)$ is (positive or negative) definite.
 \end{enumerate}
\end{Thm}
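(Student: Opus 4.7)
The plan is to handle the three implications separately. The direction (2) $\Rightarrow$ (1) is immediate since $\sC(M)$ is nonempty by construction. For (3) $\Rightarrow$ (2) I would simply repackage the rank-one specialization of the G\r{a}rding--Renegar derivative trick set up in the paragraph preceding Definition~\ref{def:CM}. Assume without loss of generality that $M(e)$ is positive definite. For any $\lambda \in \C^d \setminus\{0\}$, the matrix $E = \ol{\lambda}\lambda^T$ is a nonzero positive semidefinite Hermitian matrix of rank one, and
\[
\lambda^T M(\varx)^{\rm adj} \ol{\lambda} \;=\; \trace(E \cdot M(\varx)^{\rm adj})
\]
is the restriction to the linear subspace $\{M(\varx) : \varx\in\R^{n+1}\}$ of the polynomial in \eqref{eq:RenMatrix}, which interlaces $\det$ on the ambient space of Hermitian matrices with respect to any positive definite point. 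Since this subspace contains the positive definite matrix $M(e)$, the interlacing is preserved, giving (2). The negative definite case reduces to the positive definite one by replacing $M$ with $-M$.

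The essential step is (1) $\Rightarrow$ (3). Suppose $g_0 = \lambda_0^T M^{\rm adj} \ol{\lambda_0}$ interlaces $f$ with respect to $e$. Then $g_0(e) \ne 0$ and $f(e) = \det M(e) \ne 0$, so $M(e)$ is invertible. I would argue by contradiction: assume $M(e)$ is indefinite, with signature $(p,q)$ and $p, q \ge 1$. Unitary diagonalization of $M(e)$ shows that $M(e)^{\rm adj} = \det(M(e)) \cdot M(e)^{-1}$ is also Hermitian, invertible, and indefinite, since its eigenvalues are $\det(M(e))/\mu_i$ where the $\mu_i$ are the eigenvalues of $M(e)$; this gives signature $(p,q)$ or $(q,p)$ according to the parity of $q$. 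Hence the Hermitian form $\lambda \mapsto \lambda^T M(e)^{\rm adj}\ol{\lambda}$ attains values of both signs, so we may choose $\lambda_1 \in \C^d \setminus\{0\}$ with $g_1(e) := \lambda_1^T M(e)^{\rm adj} \ol{\lambda_1}$ of sign opposite to $g_0(e)$. The polynomial $g_1 = \lambda_1^T M^{\rm adj} \ol{\lambda_1}$ is then not identically zero. Proposition~\ref{prop:Hesse} applied to $\lambda_0$ and $\lambda_1$ yields $g_0 \cdot g_1 \ge 0$ on $\sV_{\R}(f)$, and Lemma~\ref{lem:nonnegInterlacers} then forces $g_1$ to interlace $f$ with respect to $e$. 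But the concluding paragraph of the proof of Lemma~\ref{lem:nonnegInterlacers} shows that, for two polynomials both interlacing $f$ with respect to $e$, the common sign of their product on $\sV_{\R}(f)$ equals $\sgn(g_0(e) g_1(e))$. Combined with $g_0 g_1 \ge 0$, this forces $g_0(e) g_1(e) \ge 0$, contradicting our choice of $\lambda_1$.

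The main obstacle I anticipate is this final sign comparison, translating the global sign of $g_0 \cdot g_1$ on the hypersurface $\sV_{\R}(f)$ back into a statement about the local values $g_0(e)$ and $g_1(e)$. This is exactly what the end of the proof of Lemma~\ref{lem:nonnegInterlacers} supplies, but invoking it cleanly requires that $g_0$ and $g_1$ be coprime to $f$ (which follows from $\deg g_i = d-1$ and irreducibility of $f$) and that $\sV_{\R}(f)$ be nonempty along generic lines through $e$ (which follows from hyperbolicity together with $\deg f \ge 1$). With these checks in place, the argument is purely algebraic and topological, relying only on Proposition~\ref{prop:Hesse}, Lemma~\ref{lem:nonnegInterlacers}, and the spectral theorem for Hermitian matrices.
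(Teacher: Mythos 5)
Your proposal is correct, and the cycle (2)$\Rightarrow$(1), (3)$\Rightarrow$(2), (1)$\Rightarrow$(3) covers all three equivalences. Two of the three implications, though, take genuinely different routes than the paper. For (3)$\Rightarrow$(2), you invoke the general fact that $D_E\det$ interlaces $\det$ on the ambient Hermitian space for any PSD $E\neq 0$ and any positive definite base point, then restrict to the linear pencil $x\mapsto M(x)$; the paper instead derives interlacing self-containedly from the single-direction G\r{a}rding fact $D_e(\det M)$ interlaces $\det M$, decomposing $M(e)=\sum_j\overline{\mu_j}\mu_j^T$ and combining Proposition~\ref{prop:Hesse} with Lemma~\ref{lem:nonnegInterlacers}. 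Your version is shorter but leans on the (unproved, merely asserted) general interlacing statement for PSD directions and on the restriction principle for interlacers; the paper's avoids both. For the hard direction, you argue (1)$\Rightarrow$(3) directly by contradiction, producing a $g_1\in\sC(M)$ with $g_1(e)$ of the opposite sign and concluding via the sign-at-infinity observation buried in the \emph{proof} of Lemma~\ref{lem:nonnegInterlacers} (namely that the constant sign of $g_0g_1$ on $\sV_\R(f)$ equals $\sgn(g_0(e)g_1(e))$). The paper instead shows (2)$\Rightarrow$(3) by noting that interlacers of $f$ cannot vanish in the component of $e$ in $\R^{n+1}\setminus\sV_\R(f)$, so the (by Proposition~\ref{prop:Hesse}) nonnegative and not-identically-zero product $g\cdot h$ must be positive throughout that component and hence at $e$. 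The two arguments have the same mathematical content; yours relies on a fact stated only inside a proof rather than as a lemma, which is fine logically but slightly less modular, while the paper's version uses connectivity of the hyperbolicity cone. One small point worth making explicit in your version: that $g_0g_1$ is not identically zero on $\sV_\R(f)$ (this follows from irreducibility of $f$ and a degree count, exactly as the paper points out), so the constant sign is strictly positive rather than zero.
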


\begin{proof}(2)$\Rightarrow$(1): Clear. \smallskip

  \noindent (1)$\Rightarrow$(2): Suppose that
  $g=\lambda^TM^{\rm adj}\ol\lambda$ interlaces $f$. 
  Let $h$ denote another element of $\sC(M)$, say $h=\mu^TM^{\rm adj}\ol\mu$ where
 $\mu\in\C^d$. From Proposition~\ref{prop:Hesse}, we see that 
 the product $g\cdot h$ is nonnegative on $\sV_\R(f)$. 
 Then, by Lemma~\ref{lem:nonnegInterlacers},
  $h$ interlaces $f$. \medskip

\noindent (3)$\Rightarrow$(2): 
By switching $M$ with $-M$ and $f$ with $-f$ if necessary, 
we can take $M(e)$ to be positive definite and write it as 
$M(e) =\sum_j \overline{\mu_j}\mu_j^T$ where $\mu_j \in \C^d$. 
Then the derivative $D_e(\det(M))$ equals 
\[D_e(\det(M)) \; =\; \trace\!\left(M(e) \cdot M^{\rm adj}\right) \; =\; 
 \trace\!\left( \sum_j \overline{\mu_j}\mu_j^T\cdot  M^{\rm adj}\right)
\;=\; \sum_j  \mu_j^TM^{\rm adj}\overline{\mu_j}.\]
Then for any $\lambda \in \C^d$, the polynomial 
\[ (\lambda^TM^{\rm adj} \overline{\lambda}) \cdot D_e(\det(M))
\;\;=\;\; \sum_{j} (\lambda^TM^{\rm adj} \overline{\lambda})(\mu_j^TM^{\rm adj}\overline{\mu_j})
\]
is nonnegative on $\sV_{\R}(\det(M))$, using Proposition~\ref{prop:Hesse}.  
Because $D_e(\det(M))$ interlaces
$\det(M)$ with respect to $e$, we can then use Lemma~\ref{lem:nonnegInterlacers} to see that 
$\lambda^TM^{\rm adj} \overline{\lambda}$ also interlaces $\det(M)$ with respect to $e$. \medskip

\noindent (2)$\Rightarrow$(3): 
First, let us show that any two elements $g, h$ of $\sC(M)$ have the same sign at the point $e$. 
Since $f$ is irreducible, the polynomial $g\cdot h$
cannot vanish on $\sV_{\R}(f)$. 
By Proposition~\ref{prop:Hesse}, the product $g\cdot h$ is nonnegative on $\mathcal{V}_{\R}(f)$ and 
thus strictly positive on a dense subset of $\mathcal{V}_{\R}(f)$.  Furthermore, because both $g$ and $h$ interlace 
$f$, they cannot have any zeroes in the component of $e$ in $\R^{n+1}\backslash \sV_{\R}(f)$.  
So the product $g\cdot h$ must be positive on this component of $e$ in $\R^{n+1}\backslash \sV_{\R}(f)$ and thus at $e$ itself.  

Now consider the Hermitian matrix $M^{\rm adj}(e)$. 
We have shown that the sign of $ \lambda^T M^{\rm adj}(e)\overline{\lambda}$
is the same for every $\lambda \in \C^d$. 
This shows that the matrix $M^{\rm adj}(e)$ is definite, hence so is $M(e) = f(e)(M^{\rm adj}(e))^{-1}$.
\end{proof}
The diagonal $(d-1)\times(d-1)$ minors of $M$ are elements of
$\sC(M)$.  So a corollary of Theorem~\ref{thm:interlaceDetRep} is that
a linear subspace of Hermitian matrices contains a definite matrix if
and only if its diagonal co-maximal minors interlace its
determinant. For an alternative proof of this fact, see \cite[Theorem~5.3]{VinReview}.

We conclude this section with a useful lemma about limits of
determinantal representations: The map taking a matrix with
linear entries to the determinant is closed when restricted to
definite representations, which it need not be in general.
This was also shown by Speyer \cite[Lemma 8]{Speyer}.

\begin{Lemma}\label{lemma:DefDetClosed}
  Let $e\in\R^{n+1}$. The set of homogeneous polynomials
  $f\in\R[x]_d$ with $f(e)=1$ that possess a Hermitian determinantal
  representation $f=\det(M)$ where $M(e)$ is positive definite
  is closed in $\R[x]_d$. 
\end{Lemma}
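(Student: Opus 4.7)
The plan is to take a sequence $(f_k)$ in the set with $f_k \to f$ in $\R[x]_d$, write $f_k = \det(M_k)$ with $M_k(e)$ positive definite and $f_k(e)=1$, and produce a Hermitian determinantal representation of $f$ with positive definite value at $e$ as a limit of (a subsequence of) the $M_k$.

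First I would \emph{normalize} each representative. Since $M_k(e)$ is positive definite with $\det M_k(e) = f_k(e) = 1$, we can factor $M_k(e) = B_k B_k^{*}$ with $B_k \in \GL_d(\C)$ and $|\det B_k|^2 = 1$. Replacing $M_k$ by $B_k^{-1} M_k B_k^{-*}$ gives a new Hermitian matrix of linear forms with the same determinant (as $|\det B_k|^{-2}=1$) and with $M_k(e) = I$. So we may assume $M_k(e)=I$ for every $k$.

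Next, I would establish a \emph{uniform bound} on the coefficient matrices. For any $a \in \R^{n+1}$, the identity $M_k(te+a) = tI + M_k(a)$ gives
\[
f_k(te+a) \;=\; \det(tI+M_k(a)),
\]
so the coefficients of $f_k(te+a)$ in $t$ are the elementary symmetric functions of the eigenvalues of the Hermitian matrix $M_k(a)$. Since $f_k \to f$, these coefficients are bounded in $k$ for each fixed $a$. A standard root bound (Cauchy's bound for monic polynomials) then shows that the eigenvalues of $M_k(a)$ are uniformly bounded in $k$; being Hermitian, $\|M_k(a)\|_{\mathrm{op}}$ is bounded as well. Writing $M_k = \sum_{j=0}^n x_j A_{k,j}$ and applying the bound with $a$ ranging over the standard basis of $\R^{n+1}$, we conclude that each sequence of Hermitian matrices $(A_{k,j})_k$ is bounded.

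Finally, by compactness, pass to a subsequence along which $A_{k,j} \to A_j$ for every $j$, and set $M = \sum_j x_j A_j$. Then $M$ is Hermitian, $\det M = \lim \det M_k = f$ by continuity, and $M(e) = \lim M_k(e) = I$ is positive definite, showing that $f$ lies in the claimed set.

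The main obstacle is the boundedness step: a priori nothing prevents the entries of $M_k$ from blowing up while $\det M_k$ stays convergent (this is indeed what can happen without the positive-definiteness hypothesis). The crucial trick is Step 1, which normalizes $M_k(e) = I$ so that $f_k(te+a)$ becomes the characteristic polynomial of $M_k(a)$; boundedness of the coefficients of $f_k$ then translates directly, via hermiticity, into operator-norm control of $M_k(a)$.
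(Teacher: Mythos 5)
Your proof is correct and follows essentially the same route as the paper: normalize so that $M_k(e)=I$ (noting $|\det B_k|=1$ so the determinant is unchanged), observe that $f_k(te+a)$ is then the characteristic polynomial of a Hermitian matrix so that convergence of the $f_k$ forces boundedness of the coefficient matrices, and pass to a convergent subsequence. The only (cosmetic) difference is that you invoke Cauchy's root bound to get uniform eigenvalue control, while the paper appeals directly to continuity of roots; the paper also leaves the $|\det U|=1$ point implicit where you make it explicit.
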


\begin{proof}
First we observe that if $f(e)=1$ and $f=\det(M)$ where $M(e)\succ 0$, 
then $f$ has such a representation $M'$ for which $M'(e)$ is the identity matrix. 
To find it, we can decompose the matrix $M(e)^{-1}$ as $\ol{U}U^T$ where $U\in \C^{d\times d}$.
Then $M'=U^TM\ol{U}$ is a definite determinantal representation of $f$ with $M'(e)=I$.

  Now let $f_k\in\R[x]_d$ be a sequence of polynomials converging to $f$ 
  such that $f_k=\det(M^{(k)})$ with $M^{(k)}(x)=x_0M^{(k)}_0+\cdots+x_nM^{(k)}_n$
  and $M^{(k)}(e)=I_d$. For each $j$, let $e_j$ denote the $j$th unit vector. 
  Since  $f_k(te-e_j)$ is the characteristic polynomial of $M^{(k)}_j$, 
  the eigenvalues of each $M^{(k)}_j$ converge to the zeros
  of $f(te-e_j)$. It follows that each sequence $(M^{(k)}_j)_k$ is
  bounded. We may therefore assume that the sequence $M^{(k)}$ is
  convergent (after successively passing to a convergent subsequence
  of $M^{(k)}_j$ for each $j=0,\dots,n$) and conclude that
  $f=\det(\lim_{k\to\infty} M^{(k)})$.
\end{proof}

\section{Dixon's construction for hyperbolic curves}\label{sec:Dixon}

Here we describe a modification of the classical construction of
Dixon~\cite{33.0140.04}, which relates determinantal representations
of plane curves to contact curves.  Dixon considered only determinants
of symmetric matrices.  As described below, we can use a similar
method to construct Hermitian determinantal representations.  The
exact relation of these determinantal representations to families of
``contact'' curves is somewhat subtle and has been worked out by
Vinnikov in \cite{MR1024486}.  Here we give an account using 
only intersection theory of plane curves
(all of which can be found for example in \cite{MR1042981}), and refer
to \cite{MR1024486} for more detailed information.  Because we now
deal only with plane curves, we fix $n=2$ and replace 
$(x_0,x_1,x_2)$ by $(x,y,z)$.

As we saw in Proposition~\ref{prop:Hesse}, for any square matrix $M$ of linear forms, 
the matrix $M^{\rm adj}$ has rank at most one along $\sV_{\C}(\det(M))$. In particular, 
its $2\times 2$ minors lie in the ideal generated by $\det(M)$. 
The main idea of Dixon is to reconstruct $M$ by producing a suitable $A = M^{\rm adj}$,
namely, a $d\times d$ matrix of forms of degree $d-1$ whose $2 \times 2$ minors lie in the ideal $(\det(M))$. 
We modify his construction to produce a Hermitian determinantal representation. 
Theorem~\ref{thm:interlaceDetRep} shows that if the top left entry of $A$ interlaces $f$, then 
this determinantal representation will be definite. 

Here is a summary of the construction.  
The input is a smooth real form $f$ that is hyperbolic with respect to a point $e=(e_0,e_1,e_2)\in \R^3$, and the output
is a definite Hermitian determinantal representation $M$ of $f$. \medskip

\begin{itemize}
\item Let $a_{11} $ be the form 
$D_ef = e_0\frac{\partial f}{\partial x}+e_1\frac{\partial f}{\partial
  y}+e_2\frac{\partial f}{\partial z}$ of degree $(d-1)$. \smallskip 
\item Split the $d(d-1)$ points $\sV_{\C}(f)\cap \sV_{\C}(a_{11})$ into two disjoint, conjugate sets of points $S \cup \overline{S}$.  \smallskip
\item Extend $a_{11}$ to a basis $\{a_{11}, \hdots, a_{1d}\}$ of the forms in $\C[x,y,z]$ 
of degree $d-1$ that vanish on the set of points $S$. \smallskip
\item For $1<j\leq k$, let $a_{jk}$ be a polynomial for which $a_{11}a_{jk} - \overline{a_{1j}}a_{1k}$ lies in the ideal $( f )$, with $a_{jk}$ real if $j=k$.  For $j<k$, define $a_{kj} = \overline{a_{jk}}$ and define $A = (a_{jk})_{j,k}$ to be the resulting $d\times d$ matrix of forms of degree $d-1$.\smallskip
\item Define $M$ to be the matrix of linear forms obtained by dividing each entry of $A^{\rm adj}$ by $f^{d-2}$.
\end{itemize}

We will show that these steps can be carried through and that the resulting matrix $M$ is a
definite determinantal representation of $f$. 
We see that the output depends on some choices, the most important of which is the
  splitting of the points $\sV_{\C}(f,a_{11})$. The resulting determinantal representation 
  depends on the divisor of $\sV_{\C}(f)$ consisting of the points $S$.  
  To discuss this precisely, we use the language of divisors on curves. 

For a form $f\in \C[x,y,z]$, let
$\Div(f)$ denote the free abelian group over the complex points
$\sV_{\C}(f)$. Thus an element of $\Div(f)$ is an expression $D=\sum_{i=1}^k
n_iP_i$ with $P_1,\dots,P_k\in \sV_{\C}(f)$ and $n_i\in\Z$, called
a \emph{divisor on $\sV_{\C}(f)$}. The degree of the divisor $D$ is defined by
$\deg(D)=\sum_{i=1}^k n_i$ and its conjugate divisor is 
$\ol{D}=\sum_{i=1}^kn_i\ol{P_i}$.
If $g \in \C[x,y,z]$ is also homogeneous and
shares no factors with $f$, then the
\emph{intersection divisor} of $f$ and $g$ is defined by
$f.g=\sum_{P\in \sV_{\C}(f,g)}I_P(f,g)\cdot P$, where $I_P(f,g)$ is the
intersection multiplicity of $f$ and $g$ at the point $P$. 
For two forms $g, h$ in $\C[x,y,z]$ that are coprime to $f$, we have that 
$f.(gh)=f.g+f.h$ and $f.g = f.(g+hf)$.  
If $f$ has degree $d$
and $g$ has degree $e$, B\'ezout's theorem says that
$\deg(f.g)=de$. Given a divisor $D=\sum_{i=1}^k n_i P_i$, we
write $D\ge 0$ if $n_i\ge 0$ for $i=1,\dots,k$. For two divisors
$D,E\in\Div(f)$, write $E\ge D$ if $E-D\ge 0$. We need the
following classical result:

\begin{Thm}[Max Noether]\label{thm:maxnoether} Let $\sV_{\C}(f)$ be a smooth projective plane
  curve over $\C$ and let $g,h\in\C[x,y,z]$ be homogeneous. Assume
  that $g$ and $h$ have no irreducible components in common with
  $f$. If $f.h\ge f.g$, then there exist homogeneous
  polynomials $a,b\in\C[x,y,z]$ such that $h=af+bg$.
  If $f,g,h$ are all real, then $a,b$ can also be chosen to be real.
\end{Thm}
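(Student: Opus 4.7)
The plan is to follow the classical strategy for Max Noether's \emph{AF+BG} theorem: convert the divisor hypothesis into a collection of local ideal-containments at the points of $\sV_{\C}(f,g)$, and then assemble these into a global identity via the Chinese Remainder Theorem on a finite-dimensional quotient ring.

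First, I would choose coordinates so that $\sV_{\C}(f,g)\cap\{z=0\}=\emptyset$, which is possible since $\sV_{\C}(f,g)$ is a finite set by B\'ezout, and dehomogenize to $F(x,y)=f(x,y,1)$, $G(x,y)=g(x,y,1)$, $H(x,y)=h(x,y,1)$. The hypothesis $f.h\ge f.g$ translates into $I_P(F,H)\ge I_P(F,G)$ at every $P\in\sV_{\C}(F,G)$. I claim $H\in(F,G)\sO_P$ locally at each such $P$, where $\sO_P$ is the local ring of $\A^2$ at $P$. Since $\sV_{\C}(f)$ is smooth at $P$, the quotient $\sO_P/(F)$ is a discrete valuation ring, and in it $G$ has order exactly $I_P(F,G)$ while $H$ has order at least $I_P(F,H)\ge I_P(F,G)$. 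Hence $H$ is a multiple of $G$ modulo $F$ in $\sO_P$, giving the local containment.

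To globalize, note that because $f$ and $g$ share no irreducible component, $F$ and $G$ are coprime in $\C[x,y]$ and $R=\C[x,y]/(F,G)$ is a finite-dimensional $\C$-algebra whose maximal ideals are exactly the points of $\sV_{\C}(F,G)$. By the Chinese Remainder Theorem, $R\cong\prod_P\sO_P/(F,G)\sO_P$, and the local conditions above show that the class of $H$ vanishes in each factor. Thus $H=AF+BG$ in $\C[x,y]$. Homogenizing both sides and matching degrees yields an identity $z^k h=af+bg$ in $\C[x,y,z]$ with $a,b$ homogeneous, for some $k\ge 0$. Because $\sV_{\C}(f,g)$ misses $\{z=0\}$ and $f,g$ form a regular sequence, $z$ is not a zero-divisor modulo $(f,g)$, so I can cancel $z^k$ and obtain $h=af+bg$.

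Finally, the real case follows by averaging: if $f,g,h$ are real and $h=af+bg$ over $\C$, conjugating gives $h=\overline{a}f+\overline{b}g$, whence $h=\tfrac{1}{2}(a+\overline{a})f+\tfrac{1}{2}(b+\overline{b})g$ has real coefficients. The main obstacle I anticipate is the local implication $I_P(F,H)\ge I_P(F,G)\Rightarrow H\in(F,G)\sO_P$, which depends crucially on the smoothness of $\sV_{\C}(f)$ at $P$ to identify $\sO_P/(F)$ with a DVR; without smoothness the intersection multiplicity condition no longer captures ideal membership cleanly, and some care is needed with the power-series or uniformizer description at the smooth point, as in \cite{MR1042981}.
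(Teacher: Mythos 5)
Your proof is correct and is essentially the argument the paper outsources to Fulton (\cite{MR1042981}, \S 5.5): the paper itself supplies only the reality part via the averaging $h=\tfrac12(a+\overline a)f+\tfrac12(b+\overline b)g$, which you reproduce verbatim, and cites Fulton for the rest, which is exactly the local-DVR-plus-CRT argument you give. The only places worth tightening are routine: (i) after choosing coordinates so that $\sV_{\C}(f,g)\cap\{z=0\}=\emptyset$, one should also note that $z\nmid f$ (automatic since a smooth plane curve is irreducible and $f\neq z$) and $z\nmid g$ (else $\sV_{\C}(f)\cap\{z=0\}\subseteq\sV_{\C}(f,g)$), so that $F^*=f$, $G^*=g$ and $F,G$ are genuinely coprime in $\C[x,y]$; (ii) in the local step, $\ord_P(\overline H)$ equals $I_P(F,H)$, not just bounds it from below, but your inequality chain still closes; and (iii) the dehomogenization/clearing step yielding $z^k h=af+bg$ is fine, and your observation that $z$ is a nonzerodivisor mod $(f,g)$ — because $(f,g)$ is a height-two complete intersection whose associated primes all avoid $\{z=0\}$ — is exactly what justifies cancelling $z^k$.
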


\begin{proof}
  See, for example, \cite[\S
  5.5]{MR1042981} for the proof. 
For the reality of $a$ and $b$, note that if $f,g,h$ are
  all real and $h=af+bg$, then $h=\frac12(a+\ol{a})f+\frac12(b+\ol{b})g$.
\end{proof}

The intersection divisors of interest to us come from curves that have 
special intersection with the set of real points $\sV_{\R}(f)$.

\begin{Def}
Let $f, g \in \R[x,y,z]$. Then $\sV_{\C}(g)$ is a \textbf{curve of real contact} of $\sV_{\C}(f)$
  if there exists a divisor $D\in\Div(f)$ such that $f.g=D+\ol{D}$. In
  this case, the divisor $D$ is called a \textbf{real-contact divisor} of $\sV_{\C}(f)$.
\end{Def}

In other words, a real plane curve is a curve of real contact of $\sV_{\C}(f)$ if and only
if all real intersection points with $\sV_{\C}(f)$ have even mulitplicity. The simplest example of such a 
curve $\sV_{\C}(g)$ is one for which $\sV_{\R}(f)\cap \sV_{\R}(g) = \emptyset$.  
For us, the most important examples come from real curves that interlace $f$. 

\begin{Prop}\label{prop:InterlacingRealContact}
Suppose $f \in \R[x,y,z]$ is hyperbolic with respect to $e$ and that the curve
$\sV_{\C}(f)$ has no real singular points. Then any form that interlaces $f$ with 
respect to $e$ is a curve of real contact of $\sV_{\C}(f)$.
\end{Prop}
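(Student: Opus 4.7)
The plan is to reduce the statement to a local claim at each real intersection point of $\sV_\C(f)$ and $\sV_\C(g)$. Non-real intersection points automatically come in complex-conjugate pairs with equal multiplicities, so they already contribute a divisor of the form $E+\overline{E}$ to $f.g$. Thus I need only show that each real intersection point $p \in \sV_\R(f) \cap \sV_\R(g)$ has \emph{even} intersection multiplicity $I_p(f,g)$; afterwards I can split each such point in half and assemble the desired divisor $D$.

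Fix such a real $p$. By hypothesis $\sV_\C(f)$ has no real singular points, so $p$ is a smooth point of $\sV_\C(f)$. The implicit function theorem then provides a real-analytic parametrization $\gamma \colon (-\epsilon,\epsilon)\to \sV_\R(f)$ with $\gamma(0)=p$, under which $I_p(f,g)$ equals the order of vanishing at $s=0$ of the real-analytic function $s \mapsto g(\gamma(s))$. It therefore suffices to show that $g\circ\gamma$ does not change sign in a neighborhood of $0$; then its order of vanishing at $0$ must be even.

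To obtain this constant-sign statement I will combine the two lemmas of Section~\ref{sec:hyperbolic}. By Lemma~\ref{lemma:NonTouchingOvals} together with the no-real-singularities hypothesis, $D_ef$ is nowhere zero on $\sV_\R(f)$; being continuous, it has constant sign on each connected component of $\sV_\R(f)$. On the other hand, $g$ and $D_ef$ are both polynomials of degree $d-1$ that interlace $f$ with respect to $e$, so Lemma~\ref{lem:nonnegInterlacers} forces the product $g \cdot D_ef$ to be either everywhere nonnegative or everywhere nonpositive on $\sV_\R(f)$. Combining these two observations, $g$ itself has constant sign on each connected component of $\sV_\R(f)$, and in particular $g\circ\gamma$ does not change sign near $s=0$.

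The main technical obstacle is that Lemma~\ref{lem:nonnegInterlacers} is stated for $f$ irreducible. The no-real-singularities hypothesis, however, implies that $f$ is square-free and that the real loci of its distinct real irreducible factors are pairwise disjoint, so any real $p\in\sV_\R(f)$ lies on a unique real irreducible factor $f_i$ of $f$, and the argument above can be localized to $f_i$ one component at a time. Granted the evenness of $I_p(f,g)$ at every real $p$, the divisor is built explicitly by writing
\[
f.g \;=\; \sum_i 2k_i\, p_i \;+\; \sum_j m_j\bigl(q_j+\overline{q_j}\bigr)
\]
with the $p_i$ real and the $q_j$ non-real, and setting $D = \sum_i k_i p_i + \sum_j m_j q_j$, so that $f.g = D+\overline{D}$ as required.
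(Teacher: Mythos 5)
Your argument is essentially the paper's proof: use Lemma~\ref{lem:nonnegInterlacers} to conclude that $g\cdot D_ef$ has constant sign on $\sV_\R(f)$, use Lemma~\ref{lemma:NonTouchingOvals} to conclude that $D_ef$ is nonvanishing at real points of $\sV_\R(f)$, and therefore that $g$ has locally constant sign, hence even vanishing order, at each real intersection point; non-real intersection points pair up by conjugation with equal multiplicities. You are more explicit than the paper about the local parametrization and about assembling the divisor $D$ at the end, which is reasonable.

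The one place I would push back is your handling of reducibility. You are right to flag that Lemma~\ref{lem:nonnegInterlacers} is stated for irreducible $f$, while Proposition~\ref{prop:InterlacingRealContact} does not assume this (the paper applies the lemma without comment, so the concern is real). However, the proposed repair---localizing to the unique real irreducible factor $f_i$ through $p$---does not go through literally: $g$ has degree $d-1=\deg f-1$, so it cannot interlace a proper factor $f_i$ of lower degree, and Lemma~\ref{lem:nonnegInterlacers} cannot be invoked with $f_i$ in place of $f$. The cleaner fix is to notice that the direction of Lemma~\ref{lem:nonnegInterlacers} actually being used here, namely that two interlacers of $f$ have a product of constant sign on $\sV_\R(f)$, nowhere uses irreducibility: its proof needs only that $f$ be square-free and that $f$ be coprime to $g\cdot D_ef$ (so that, on a generic line through $e$, the roots of $f$ are simple and distinct from those of $g\cdot D_ef$). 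Both of these weaker conditions follow from your hypotheses (a repeated real factor would create real singular points; coprimality is needed for $f.g$ to be defined at all), so the constant-sign conclusion holds on all of $\sV_\R(f)$ and your argument then finishes as written.
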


\begin{proof}
  If $g \in \R[x,y,z]$ interlaces $f$ with respect to $e$, then 
  by Lemma~\ref{lem:nonnegInterlacers}, the product $g\cdot D_ef$ has constant
  sign on $\sV_\R(f)$. If $\sV_{\R}(f)\cap \sV_{\R}(g)$ is empty, then 
  $g$ is automatically a curve of real contact to $\sV_{\C}(f)$. 
  On the other hand, suppose there is a point $P$ in $\sV_{\R}(f)  \cap \sV_{\R}(g)$. 
  By Lemma~\ref{lemma:NonTouchingOvals},  $D_ef(P)$ is nonzero. It follows that the
  restriction of $g$ to $\sV_{\C}(f)$ has locally constant sign around
  $P$ and has therefore even vanishing order in $P$. That vanishing order
  is exactly the intersection multiplicity of $f$ and $g$ in $P$,
  meaning that every real intersection point of $\sV_{\C}(f)$ and $\sV_{\C}(g)$ has even multiplicity.
\end{proof}

One can also obtain real-contact divisors directly from a Hermitian determinantal 
representation, as shown in the following proposition. Our eventual goal 
is to reconstruct the determinantal representation from such a divisor. 

\begin{Prop}\label{prop:divM}
 Let $\sV_{\C}(f)$ be a smooth real projective plane curve of degree $d$ 
 and let $f=\det(M)$ be a Hermitian linear determinantal
 representation. Let $(a_{11},\dots,a_{1d})$ be the first row of
 $M^{\rm adj}$. For each intersection point $P\in\sV_{\C}(a_{11},f)$,
  let $n_P=\min\{I_P(f,a_{1j})\:|\: j=1,\dots,d\}$.   Then
\[
D_M=\sum_{P\in\sV_{\C}(a_{11},f)} n_P P
\]
is a real-contact divisor of degree $d(d-1)/2$ on $\sV_{\C}(f)$ with
$f.a_{11}=D_M+\ol{D_M}$.
\end{Prop}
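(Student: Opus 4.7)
\smallskip

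The plan is to establish the divisor equality $f.a_{11} = D_M + \ol{D_M}$ pointwise; the degree claim $\deg D_M = d(d-1)/2$ will then follow immediately from B\'ezout, since $\deg(f.a_{11}) = d(d-1)$ and $\deg \ol{D_M} = \deg D_M$. Writing $\mu_{jk}(P) := I_P(f, a_{jk})$ for each intersection multiplicity, the task reduces to showing $\mu_{11}(P) = n_P + n_{\ol P}$ at every $P \in \sV_\C(f)$.

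The main algebraic tool is that $M \cdot M^{\rm adj} = f \cdot I$ forces $M^{\rm adj}$ to have rank at most one along $\sV_\C(f)$, so every $2 \times 2$ minor of $M^{\rm adj}$ lies in the principal ideal $(f)$. Using $a_{j1} = \ol{a_{1j}}$, the minor on rows $\{1, j\}$ and columns $\{1, k\}$ gives $a_{11} a_{jk} \equiv a_{1k} \ol{a_{1j}} \pmod f$. Since $f$ is real, $I_P(f, \ol{a_{1j}}) = I_{\ol P}(f, a_{1j}) = \mu_{1j}(\ol P)$, so at each $P \in \sV_\C(f)$ and for all $j, k$, one obtains the identity
\[
\mu_{11}(P) + \mu_{jk}(P) \;=\; \mu_{1k}(P) + \mu_{1j}(\ol P).
\]

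Two substitutions will yield the equality $\mu_{11}(P) = n_P + n_{\ol P}$. First, pick $k^*$ minimizing $\mu_{1k}(P)$ and $j^*$ minimizing $\mu_{1j}(\ol P)$; substituting $(j, k) = (j^*, k^*)$ gives $\mu_{11}(P) + \mu_{j^*k^*}(P) = n_P + n_{\ol P}$, hence $\mu_{11}(P) \le n_P + n_{\ol P}$. For the reverse inequality I invoke the smoothness hypothesis: from the cofactor formula $\partial f/\partial M_{ij} = (M^{\rm adj})_{ji}$, a vanishing $M^{\rm adj}(P) = 0$ on $\sV_\C(f)$ would force $\nabla f(P) = 0$, contradicting smoothness. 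Hence some entry $a_{j_0 k_0}(P) \ne 0$, and substituting $(j, k) = (j_0, k_0)$ into the identity gives $\mu_{11}(P) = \mu_{1 k_0}(P) + \mu_{1 j_0}(\ol P) \ge n_P + n_{\ol P}$. Combining the two bounds yields $\mu_{11}(P) = n_P + n_{\ol P}$, and summing over $P$ (noting that $\sum_P n_{\ol P}\, P = \ol{D_M}$) produces $f.a_{11} = D_M + \ol{D_M}$; the degree then drops out of B\'ezout.

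The main obstacle, I expect, is cleanly invoking the nonvanishing of $M^{\rm adj}$ on the smooth locus of $\sV_\C(f)$: it is a classical but nontrivial fact about linear determinantal representations that rests on the cofactor identification of $\partial f/\partial M_{ij}$. Once that input is in hand, everything else is a bookkeeping exercise on the rank-one identity combined with choosing the right indices to substitute.
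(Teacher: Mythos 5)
Your proof is correct and follows essentially the same route as the paper: both rest on the rank-one structure of $M^{\rm adj}$ modulo $(f)$ (Proposition~\ref{prop:Hesse}) and on smoothness forcing $M^{\rm adj}(P)\neq 0$ via the cofactor formula, with the rest being intersection-multiplicity bookkeeping. A small improvement in your write-up is that for the upper bound you use the general minor relation $a_{11}a_{jk}\equiv a_{1k}\ol{a_{1j}}\pmod{f}$ with the minimizing indices $j^*$ and $k^*$ chosen independently, which makes the inequality $I_P(f,a_{11})\le n_P+n_{\ol P}$ immediate, whereas the paper displays only the relations with $j=k$ and leaves this deduction more implicit.
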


\begin{proof}
Using Proposition~\ref{prop:Hesse}, we have that 
$a_{jj}a_{kk}-a_{jk}\ol{a_{jk}}$ lies in the ideal $(f)$
for all $j, k$, which shows the two intersection divisors
$f.(a_{jj}a_{kk})$ and $f.(a_{jk}\ol{a_{jk}})$ to be equal. 
 Let $P\in \sV_{\C}(a_{11},f)$. Since $\sV_{\C}(f)$ is
smooth, $M^{\rm adj}(P)$ is not the zero matrix and there exists 
$j$ for which $a_{jj}(P)\neq 0$.
This implies that
\[I_P(f,a_{11})=I_P(f,a_{11}a_{jj})
=I_P(f,a_{1j}\ol{a_{1j}}),\] which shows that
the multiplicity $I_P(f,a_{11})$ is even and $\sV_{\C}(a_{11})$ is a
curve of real contact of $\sV_{\C}(f)$. Furthermore, by definition we have that
$I_P(f,a_{1j}\ol{a_{1j}})\ge n_P+n_{\ol P}$, which shows that 
$f.a_{11}\ge D_M+\ol{D_M}$. 
On the other hand, 
\[I_P(f,a_{1k}\ol{a_{1k}}) =
I_P(f,a_{11}a_{kk}) \ge I_P(f,a_{11})\]
holds for any
$k\in\{1,\dots,d\}$, and thus $f.a_{11}\le D_M+\ol{D_M}$.
This shows that $a_{11}$ is a curve of real contact and that $D_M$ is a real-contact divisor. 
\end{proof}

If the matrix $M$ is real, then $D_M$ equals $\overline{D_M}$ 
and $f.a_{11}$ equals $2D_M$, which puts a strong restriction on possible choices of $a_{11}$.  
This is the original setting of Dixon's algorithm.
The following is a modification of his construction, which reconstructs 
the Hermitian determinantal representation $M$ from a real-contact divisor $D_{M}$.

\begin{Constr}[of $A = M^{\rm adj}$]\label{constr:Dixon} 
  Let $D$ be a real-contact
  divisor of degree $\binom{d}{2}$ on $\sV_{\C}(f)$. We construct a Hermitian
  matrix $A_D$ with entries in $\C[x,y,z]_{d-1}$ as follows: 
  
  Let $a_{11}\in\R[x,y,z]_{d-1}$ be such that
  $f.a_{11}=D+\ol{D}$. Consider the complex vector space
  $V$ of polynomials $g\in\C[x,y,z]_{d-1}$ for which $f.g\ge D$. The dimension of $V$
  is at least $\binom{d+1}{2}-\binom{d}{2}=d$ (the dimension of
  $\C[x,y,z]_{d-1}$ minus the maximal number $\binom{d}{2}=\deg(D)$ of
  linearly independent
  conditions imposed on $g$ by $f.g\ge D$).
  
  Extend $a_{11}$ to a linearly independent family
  $a_{11},\dots,a_{1d}$ in $V$. 
  For $2\le j\le k\le d$, 
  we have $f.(\ol{a_{1j}}a_{1k})\ge
  D+\ol{D}=f.a_{11}$. Thus we can apply
  Theorem~\ref{thm:maxnoether} and obtain homogeneous polynomials
  $p,q\in\C[x,y,z]$ such that $\ol{a_{1j}}a_{1k}=pf+qa_{11}$. 
  Put $a_{jk}=q$. If $j=k$, then $a_{11}$ and $\ol{a_{1j}}a_{1j}$ are both real and
  we can take $a_{jj}$ real as well. Finally, put $a_{k j}=\ol{a_{jk}}$
  for $j< k$ and let $A_D=(a_{jk})_{j,k}$.
\end{Constr}

\noindent We let $A_D$ denote any matrix resulting
from the above construction. This will be the adjugate matrix of a determinantal representation of $f$. 
When $f$ is hyperbolic and $a_{11}$ interlaces $f$, then the representation will be definite. 

\pagebreak

\begin{Thm}\label{thm:DetRepFromDixonProcess}
  Let $\sV_{\C}(f)$ be a smooth real projective plane curve of degree $d$. 
   Sup\nolinebreak pose $D$ is a real-contact divisor of degree $\binom{d}{2}$ of $\sV_{\C}(f)$.
  \begin{enumerate}[(a)]
  \item Every entry of the adjugate matrix of $A_D$
  is divisible by $f^{d-2}$ and the matrix $M_D=(1/f^{d-2})A_D^{\rm
    adj}$ has linear entries. Furthermore there exists $\gamma\in\R$ such that
\[
\gamma f=\det(M_D).
\]
\item If $f$ is hyperbolic and $a_{11}=(A_D)_{11}$ interlaces $f$ with respect to $e$, 
then $\gamma\neq 0$ and the matrix $M_D(e)$ is (positive or negative) definite. 
\end{enumerate}

\end{Thm}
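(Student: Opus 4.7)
The plan is to prove (a) by an algebraic analysis of the minors of $A_D$ and to derive (b) by combining a Sylvester-type identity with Theorem~\ref{thm:interlaceDetRep}.

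For (a) the key fact is that $A_D$ is ``rank one modulo $f$.'' By construction $a_{11}a_{jk}-\overline{a_{1j}}a_{1k}\in (f)$; for a general $2\times 2$ minor I would multiply by $a_{11}^2$ and substitute these first-row relations:
\[
a_{11}^2(a_{ij}a_{k\ell}-a_{i\ell}a_{kj}) \;\equiv\; (\overline{a_{1i}}a_{1j})(\overline{a_{1k}}a_{1\ell}) - (\overline{a_{1i}}a_{1\ell})(\overline{a_{1k}}a_{1j}) \;=\; 0 \pmod f.
\]
Since $\deg a_{11}<\deg f$ we have $a_{11}\notin (f)$, and primeness of $(f)$ forces the original minor into $(f)$. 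Next I would localize at $(f)$ (where $a_{11}$ is a unit) and row-reduce $A_D$ using row~$1$; rows $2,\dots,d$ become entry-wise multiples of $f$, so each $(d-1)\times(d-1)$ minor of $A_D$ absorbs at least $d-2$ factors of $f$, showing every cofactor lies in $(f^{d-2})$. Thus $M_D=A_D^{\rm adj}/f^{d-2}$ has polynomial entries, and the degree count $(d-1)^2-d(d-2)=1$ makes them linear. Setting $p=\det(A_D)/f^{d-2}$ (of degree $d$), I have $A_DM_D=pI$, and combining Sylvester's identity $(A_D^{\rm adj})^{\rm adj}=(\det A_D)^{d-2}A_D$ with the scalar rule $(cX)^{\rm adj}=c^{d-1}X^{\rm adj}$ gives
\[
M_D^{\rm adj} \;=\; \frac{p^{d-2}}{f^{d-2}}\,A_D.
\]
The resulting entry identity $f^{d-2}(M_D^{\rm adj})_{11}=p^{d-2}a_{11}$ forces $f\mid p$ (using $a_{11}\notin (f)$ and irreducibility of $f$), so $p=\gamma_0 f$ with $\gamma_0\in\R$ a constant --- real because $A_D$ is Hermitian. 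Back-substituting yields the clean identities $M_D^{\rm adj}=\gamma_0^{d-2}A_D$ and $\det(M_D)=\gamma_0^{d-1}f$, which is (a) with $\gamma=\gamma_0^{d-1}$.

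For (b), assuming $\gamma\neq 0$, the identity $M_D^{\rm adj}=\gamma_0^{d-2}A_D$ rearranges to $A_D=\gamma_0^{-(d-2)}M_D^{\rm adj}$, so $a_{11}=(A_D)_{11}$ equals a nonzero scalar multiple of $(M_D^{\rm adj})_{11}=e_1^T M_D^{\rm adj}\overline{e_1}\in\sC(M_D)$. Since $a_{11}$ interlaces $f$ and $\det(M_D)=\gamma f$ differs from $f$ by a nonzero real scalar, this element of $\sC(M_D)$ also interlaces $\det(M_D)$ with respect to $e$, and Theorem~\ref{thm:interlaceDetRep}, implication (1)$\Rightarrow$(3), yields the required definiteness of $M_D(e)$.

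The hard part will be ruling out $\gamma_0=0$. If $\gamma_0=0$ and $d\ge 3$, the identity $M_D^{\rm adj}=0$ forces $M_D$ to have rank at most $d-2$ everywhere, and hence to fail to be definite at $e$ --- so $\gamma_0=0$ is incompatible with the conclusion sought, but does not on its face contradict the hypothesis. I would close this gap by a deformation argument within the (connected) cone of interlacers of $f$: deform $a_{11}$ continuously to the directional derivative $D_ef$, for which Lemma~\ref{lemma:NonTouchingOvals} gives $\sV_\R(f)\cap\sV_\R(D_ef)=\emptyset$ and a direct local analysis on $\sV_\C(f)$ produces $\gamma_0\neq 0$; then Lemma~\ref{lemma:DefDetClosed} propagates definiteness along the path, forcing $\gamma_0\neq 0$ at the original $a_{11}$ as well. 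A more self-contained route would exploit the auxiliary identity $\alpha^T M_D\overline\alpha=\gamma_0 f\cdot a_{11}$ (with $\alpha=(a_{11},\dots,a_{1d})^T$, derived from $(A_DM_DA_D)_{11}=\gamma_0 f\cdot a_{11}$): if $\gamma_0=0$ this places $\alpha$ in the isotropic cone of $M_D$, which should contradict linear independence of the $a_{1k}$ as sections cutting out $D$ on $\sV_\C(f)$ via a Max Noether / Riemann--Roch argument.
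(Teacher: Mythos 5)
Your argument for part (a) is correct and follows the same overall plan as the paper, though with two pleasant local variations. For the $2\times 2$ minors, you multiply by $a_{11}^2$ and substitute the first-row relations; the paper instead argues geometrically that $A_D(p)$ has rank one at every $p\in\sV_\C(f)$ with $a_{11}(p)\ne 0$ and uses irreducibility of $f$. For the higher minors, you localize at $(f)$ and row-reduce (note that strictly speaking this controls the minors of the reduced matrix $EA_D$, not of $A_D$; one should pass to the adjugate via $\mathrm{adj}(A_D)=\mathrm{adj}(EA_D)E$ or, as the paper does, invoke Lemma~\ref{lemma:minorsinrings} directly). Your trick of extracting the extra factor of $f$ in $\det(A_D)$ from the entry identity $f^{d-2}(M_D^{\rm adj})_{11}=p^{d-2}a_{11}$ is a nice alternative to the paper's appeal to Lemma~\ref{lemma:minorsinrings} with $k=d$. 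Both routes yield $\det(A_D)=\gamma_0 f^{d-1}$, $M_D^{\rm adj}=\gamma_0^{d-2}A_D$, and $\det(M_D)=\gamma_0^{d-1}f$.

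In part (b), however, there is a genuine gap. You correctly observe that once $\gamma\ne 0$ is known, the identity $M_D^{\rm adj}=\gamma_0^{d-2}A_D$ places $a_{11}$ in $\sC(M_D)$ up to a nonzero real scalar, and Theorem~\ref{thm:interlaceDetRep} then gives definiteness. But you never actually establish $\gamma\ne 0$; you label it ``the hard part'' and sketch two routes, neither of which is carried through, and neither is likely to close cleanly. The deformation route runs into the problem that Lemma~\ref{lemma:DefDetClosed} concerns limits of polynomials that already \emph{have} definite representations; it cannot be used to conclude $\gamma_0\ne 0$ for the target $a_{11}$ without first knowing $M_D$ is a representation at all, which is precisely what is at stake. (There is also no reason the map $a_{11}\mapsto A_D$ from interlacers to matrices should depend continuously on $a_{11}$, since it involves choices of $D$ and of the basis extension.) The isotropic-cone route stops at a scalar identity $\alpha^TM_D\ol\alpha=0$ and a vague appeal to Riemann--Roch.

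What is missing is the paper's Proposition~\ref{prop:exceptional}: if $\det(M_D)\equiv 0$, then for any real line $\ell$ meeting $\sV_\C(f)$ in $d$ distinct real points, some element of $\sC_D$ is divisible by $\ell^2$. On the other hand, the Hesse-type identity \eqref{eq:Hesse2} together with Lemma~\ref{lem:nonnegInterlacers} shows that when $a_{11}$ interlaces $f$, \emph{every} $\lambda^TA_D\ol\lambda\in\sC_D$ interlaces $f$, and interlacers of the square-free $f$ are themselves square-free. Since hyperbolicity of $f$ guarantees such a line $\ell$ exists, this contradiction yields $\gamma\ne 0$. Without this ingredient (or an equivalent), your proof of (b) is incomplete.
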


\noindent The following lemma will be essential for the proof of this theorem.

\begin{Lemma}\label{lemma:minorsinrings}
  Let $A$ be a $d\times d$-matrix
  with entries in $\C[x,y,z]$ with $d\ge 2$. Let $f\in \C[x,y,z]$ be irreducible. 
  If $f$ divides all $2\times 2$-minors of $A$, then for every $1\le k\le d$,  
  the polynomial $f^{k-1}$ divides every $k\times k$-minor of $A$.
\end{Lemma}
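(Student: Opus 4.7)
The plan is to work locally at the prime $f$. Let $R = \C[x,y,z]_{(f)}$ be the localization, which is a discrete valuation ring with uniformizer $f$ since $(f)$ is a height-one prime in the UFD $\C[x,y,z]$; let $\kappa$ denote its residue field. The key observation is that divisibility by $f^{k-1}$ in $\C[x,y,z]$ is equivalent to divisibility by $f^{k-1}$ in $R$: if a $k\times k$ minor $\Delta$ satisfies $\Delta/f^{k-1} = g/h$ with $g,h \in \C[x,y,z]$ and $h \notin (f)$, then $h\Delta = f^{k-1} g$, and unique factorization forces $f^{k-1} \mid \Delta$ in $\C[x,y,z]$. So it suffices to show that every $k \times k$ minor of $A$ lies in $f^{k-1}R$.

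Reducing $A$ modulo $f$ gives a matrix $\bar A$ with entries in $\kappa$, and by hypothesis all of its $2\times 2$ minors vanish. Hence $\bar A$ has rank at most one, and we can write $\bar A = \bar u\,\bar v^T$ for some $\bar u,\bar v \in \kappa^d$. Lifting these to $\tilde u,\tilde v \in R^d$ produces a decomposition $A = \tilde u\,\tilde v^T + f B$ for some matrix $B$ with entries in $R$.

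Now fix any $k \times k$ submatrix $A'$ of $A$, with induced decomposition $A' = \tilde u'\,(\tilde v')^T + f B'$. Expand $\det(A')$ by multilinearity in the columns: each column of $A'$ is a scalar multiple of $\tilde u'$ plus $f$ times a column of $B'$. Any term in the expansion that selects the rank-one summand $\tilde u'$ in two or more columns vanishes, since the resulting matrix has two columns proportional to $\tilde u'$. The surviving terms pick the rank-one summand in at most one column, so every one of them is divisible by $f^{k-1}$. Therefore $\det(A') \in f^{k-1} R$, and by the first paragraph $f^{k-1} \mid \det(A')$ in $\C[x,y,z]$.

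The only subtle point is the decomposition $A = \tilde u\,\tilde v^T + fB$ over the localization, which rests on $\bar A$ having rank at most one modulo $f$; this is immediate from irreducibility of $f$ (so $\kappa$ is a field) together with the vanishing of $2\times 2$ minors. Once this step is in place, the conclusion drops out of the column-wise expansion and no induction on $k$ is needed.
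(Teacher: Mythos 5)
Your proof is correct, but it takes a genuinely different route from the paper's. The paper argues by induction on $k$: assuming $f^{k-2}$ divides every $(k-1)\times(k-1)$ minor, it takes a $k\times k$ submatrix $B$, uses the identity $\det(B^{\mathrm{adj}})=\det(B)^{k-1}$, and compares the order of vanishing at $f$ on both sides --- since $f^{k-2}$ divides every entry of $B^{\mathrm{adj}}$, one gets $f^{k(k-2)}\mid\det(B)^{k-1}$, and the short estimate $k(k-2)\le m(k-1)\Rightarrow m\ge k-1$ finishes the inductive step. Your argument instead localizes at the prime $(f)$, reduces modulo $f$ to obtain a matrix of rank at most one over the residue field, lifts to a decomposition $A=\tilde u\,\tilde v^{T}+fB$ over the discrete valuation ring $R=\C[x,y,z]_{(f)}$, and reads off divisibility by $f^{k-1}$ directly from a column-wise multilinear expansion: any term that selects the rank-one column in two or more slots vanishes, so every surviving term carries a factor of $f$ from at least $k-1$ columns. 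Both arguments are sound. Yours avoids induction, is more conceptual, and transfers verbatim to any UFD (or indeed any Noetherian domain with a height-one prime whose localization is a DVR); the cost is invoking localization and the residue field. The paper's proof is more elementary and self-contained, resting only on the adjugate identity and order-counting in the polynomial ring, which fits the paper's stated aim of keeping the arguments as algebraic and concrete as possible.
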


\begin{proof}[Proof of Lemma]
  By hypothesis, the claim holds for $k=2$. So assume $k>2$ and
  suppose that $f^{k-2}$ divides all $(k-1)\times (k-1)$-minors of
  $A$. Let $B$ be a submatrix of size $k\times k$ of $A$. From $B^{\rm
    adj}B=\det(B)\cdot I_k$ we conclude $\det(B^{\rm
    adj})=\det(B)^{k-1}$. 
    
    Suppose $\det(B) = f^m g$ where $f$ does not divide $g$.
    Then $\det(B)^{k-1} = f^{m(k-1)}g^{k-1}$. 
    By assumption $f^{k-2}$ divides all entries of $B^{\rm
    adj}$, hence $f^{k(k-2)}$ divides its determinant $\det(B)^{k-1}$.     
      Since $f$ is irreducible, $f$ does not divide  $g^{k-1}$, so $f^{k(k-2)}$ must divide 
      $f^{m(k-1)}$. Then $k(k-2)\leq m(k-1)$ which implies that $k-1\leq m$, as claimed. 
\end{proof}

\begin{proof}[Proof of Theorem~\ref{thm:DetRepFromDixonProcess}(a)]
  By construction, the $2\times 2$ minors of $A_D$ having the form $a_{11}a_{jk}-a_{1k}a_{j1}$
  are divisible by $f$. This means that for every point $p$ in $\sV_{\C}(f)$ with $a_{11}(p)\neq 0$, 
  all rows of the matrix $A_D(p)$ are multiples of the first, and thus $A_D(p)$ has rank one.     
  Because $a_{11}$ is not divisible by $f$, it follows that 
  all the $2\times 2$ minors of $A_D$ are divisible by $f$. 
  Since $f$ is irreducible in
  $\C[x,y,z]$, all $(d-1)\times (d-1)$-minors of
  $A_D$ are divisble by $f^{d-2}$, by Lemma
  \ref{lemma:minorsinrings}. The entries of $A_D^{\rm adj}$ have degree
  $(d-1)^2$ and $f$ has degree $d$, so that $M_D=(1/f^{d-2})\cdot
  A_D^{\rm adj}$ has entries of degree $(d-1)^2-d(d-2)=1$. 
  Furthermore, by Lemma \ref{lemma:minorsinrings}, 
  $\det(A_D)$ is divisible by $f^{d-1}$. So $\det(A_D)=cf^{d-1}$
  for some $c\in\R[x,y,z]$ and we obtain
\begin{align*}
\det(M_D) &=\det(f^{2-d}A_D^{\rm adj})=f^{d(2-d)}\det(A_D^{\rm
  adj})=f^{d(2-d)}\det(A_D)^{d-1}\\
& =f^{d(2-d)}c^{d-1}f^{(d-1)^2}=c^{d-1}f.
\end{align*}
Since $\det(M_D)$ has degree $d$, we see that $c$ is a constant and we take $\gamma = c^{d-1}$.
\end{proof}

This gives us a potential determinantal representation of $f$. To finish the job, 
we need to ensure that the constant $\gamma$ is nonzero. 
Following Dixon \cite{33.0140.04}, we do this by analyzing the 
system of curves associated with $A_D$.

\begin{Lemma}\label{lem:CD}
  Let $D$ be a real-contact divisor of degree $d(d-1)/2$ of $\sV_{\C}(f)$ and write
\[
\sC_D=\bigl\{\lambda^TA_D\ol{\lambda}\:|\: \lambda\in\C^d\setminus\{0\}\bigr\}.
\]
Every element of $\sC_D$ is a curve of real contact of $\sV_{\C}(f)$ of degree $d-1$.  \end{Lemma}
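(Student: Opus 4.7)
The plan is to adapt the Hesse-style identity of Proposition~\ref{prop:Hesse} to the matrix $A_D$. For fixed $\lambda\in\C^d\setminus\{0\}$, set $g = \lambda^T A_D\overline\lambda$ and $h = e_1^T A_D\overline\lambda = \sum_k\overline{\lambda_k}\,a_{1k}$, both homogeneous of degree $d-1$. The Hermitian relation $a_{kj}=\overline{a_{jk}}$ makes $g$ a real polynomial, and the $\C$-linear independence of $a_{11},\dots,a_{1d}$ forces $h\neq 0$.

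The core step is to prove the congruence $a_{11}\,g - h\,\overline h \in (f)$. By construction, $a_{11}\,a_{jk}-\overline{a_{1j}}\,a_{1k}\in(f)$ for all $j,k$, so at every point $p\in\sV_\C(f)$ with $a_{11}(p)\neq 0$ one has $a_{jk}(p)=\overline{a_{1j}}(p)\,a_{1k}(p)/a_{11}(p)$, which makes $A_D(p)$ a rank-one matrix. Then, exactly as in Proposition~\ref{prop:Hesse}, the $2\times 2$ matrix
\[
\begin{pmatrix}e_1^T\\ \lambda^T\end{pmatrix}A_D(p)\begin{pmatrix}\overline{e_1}&\overline\lambda\end{pmatrix}
\]
has rank at most one, so its determinant $a_{11}(p)\,g(p)-h(p)\,\overline h(p)$ vanishes on the dense open subset $\sV_\C(f)\setminus\sV_\C(a_{11})$, and irreducibility of $f$ upgrades this to the claimed ideal membership. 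As an immediate consequence, $g\neq 0$: otherwise $f$ would divide $h\,\overline h$, hence one of the factors by irreducibility, but $h$ is nonzero of degree $d-1<\deg f$. Thus $\sV_\C(g)$ is a plane curve of degree $d-1$.

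To finish, I translate the congruence into intersection divisors on $\sV_\C(f)$: since $a_{11}\,g$ and $h\,\overline h$ agree modulo $f$, one obtains $f.a_{11}+f.g = f.h + f.\overline h = f.h + \overline{f.h}$, where the last step uses that $f$ is real. Combining with $f.a_{11}=D+\overline D$ yields $f.g = E+\overline E$ for $E:=f.h-D$. Effectivity of $E$ is then immediate: each $a_{1k}$ lies in the complex vector space $V$ of Construction~\ref{constr:Dixon}, hence so does $h=\sum_k\overline{\lambda_k}\,a_{1k}$, so $f.h\ge D$. This exhibits $\sV_\C(g)$ as a curve of real contact of $\sV_\C(f)$ of degree $d-1$.

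The hard part will be the Hesse-type congruence in the second paragraph: Construction~\ref{constr:Dixon} directly enforces only the first-row/first-column $2\times 2$ relations, and one must combine the rank-one structure of $A_D$ on $\sV_\C(f)\setminus\sV_\C(a_{11})$ with the irreducibility of $f$ to promote these to a genuine identity modulo $f$ for the full Hermitian pairing $\lambda^T A_D\overline\lambda$. The remaining steps are routine divisor bookkeeping.
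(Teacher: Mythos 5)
Your proposal is correct and follows essentially the same path as the paper: the key step is the Hesse-type congruence $a_{11}\cdot(\lambda^TA_D\overline\lambda)-(\lambda^TA_De_1)(\overline{\lambda^TA_De_1})\in(f)$, established via the rank-one structure of $A_D$ on $\sV_\C(f)\setminus\sV_\C(a_{11})$ and irreducibility of $f$, exactly as in the paper's proof (which cites the same rank-one fact from the proof of Theorem~\ref{thm:DetRepFromDixonProcess}(a)). You additionally spell out the divisor bookkeeping — effectivity of $f.h - D$ via $h\in V$ and the nonvanishing of $\lambda^TA_D\overline\lambda$ — which the paper leaves implicit but which is the correct and intended justification.
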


\begin{proof}
By assumption, the polynomial $a_{11} = (e_1^TA_De_1)$ is a curve of real contact to $f$ 
with real-contact divisor $D$. 
By the preceding proof, the matrix $A_D$ has rank one on $\sV_{\C}(f)$.
From this we see that for every $\lambda \in \C^d$, we have
\begin{equation}\label{eq:Hesse2}
a_{11}\cdot (\lambda^TA_D\ol{\lambda})-
(\lambda^TA_De_1)(\ol{\lambda^TA_De_1})\;\in \;(f).
\end{equation}
Hence $\lambda^T A_D \ol{\lambda}$ is a curve of real contact
with real-contact divisor $f.(\lambda^TA_De_1)-D$.
\end{proof}

In fact, if $M$ is a Hermitian determinantal representation of $f$ and $D$ is the divisor 
$D_M$ defined in Proposition~\ref{prop:divM}, then the systems of curves $\sC(M)$ 
and $\sC_D$ of Definition~\ref{def:CM}~and Lemma~\ref{lem:CD} are the same.  
We can tell whether or not $D$ could come from a determinantal representation
by examining the polynomials in $\sC_D$.

\begin{Prop} \label{prop:exceptional}
  Suppose there exists a real line $\sV_{\C}(\ell)$ which meets $\sV_{\C}(f)$
  in $d$ distinct real points. If $D$ is a real-contact divisor of
  degree $d(d-1)/2$ of $\sV_{\C}(f)$ and $\det(M_D) \equiv 0$, then there exists
  $g\in\sC_D$ with $\ell^2|g$.
\end{Prop}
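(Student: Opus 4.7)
The plan is to restrict everything to the line $\ell$ and exploit Lagrange interpolation together with the rank-at-most-one property of $A_D$ on $\sV_{\C}(f)$. First I would note that $\det(M_D)\equiv 0$ forces $\det(A_D)\equiv 0$, since $A_D^{\rm adj}=f^{d-2}M_D$ gives $\det(A_D)^{d-1}=f^{d(d-2)}\det(M_D)\equiv 0$. Let $P_1,\dots,P_d$ be the $d$ distinct real points of $\ell\cap\sV_{\C}(f)$ and set $B=A_D|_\ell$, a $d\times d$ Hermitian matrix of binary forms of degree $d-1$ on $\ell$ with $\det B\equiv 0$. From the proof of Theorem~\ref{thm:DetRepFromDixonProcess}(a), $A_D$ has rank at most one at every point of $\sV_{\C}(f)$, so each $B(P_i)$ is a rank-one or zero Hermitian matrix, which I write as $B(P_i)=\epsilon_i\,\ol{w_i}\,w_i^T$ with $\epsilon_i\in\{\pm 1\}$ and $w_i\in\C^d$ (setting $w_i=0$ if $A_D(P_i)=0$).

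Since a binary form of degree $d-1$ is determined by its values at $d$ distinct points, Lagrange interpolation gives
\[
B \;=\; \sum_{j=1}^d L_j\,B(P_j) \;=\; \ol W\,\Delta\,W^T,
\]
where $W=[w_1\mid\cdots\mid w_d]\in\C^{d\times d}$, $\Delta=\mathrm{diag}(\epsilon_1 L_1,\dots,\epsilon_d L_d)$, and $L_1,\dots,L_d$ are the real Lagrange basis binary forms of degree $d-1$ with $L_j(P_i)=\delta_{ij}$. Taking determinants,
\[
\det B \;=\; |\det W|^2\cdot\prod_{j=1}^d \epsilon_j L_j.
\]
Since $\det B\equiv 0$ while each $L_j\not\equiv 0$, this forces $\det W=0$, so the vectors $w_1,\dots,w_d$ are linearly dependent. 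Hence there exists a nonzero $\lambda\in\C^d$ with $w_i^T\ol\lambda=0$ for all $i$.

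Setting $g=\lambda^T A_D\,\ol\lambda\in\sC_D$, a direct computation gives $g(P_i)=\epsilon_i\,|w_i^T\ol\lambda|^2=0$ for each $i$. Thus $g|_\ell$ is a binary form of degree $d-1$ vanishing at $d$ distinct points, hence $g|_\ell\equiv 0$ and $\ell\mid g$.

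To upgrade $\ell\mid g$ to $\ell^2\mid g$ I would invoke Max Noether. By Lemma~\ref{lem:CD}, $g$ is a curve of real contact of $\sV_{\C}(f)$, so every real intersection point has even intersection multiplicity with $\sV_{\C}(f)$; in particular $I_{P_i}(f,g)\ge 2$ for each $i$, and therefore $f.g\ge 2(P_1+\cdots+P_d)=f.(\ell^2)$. Theorem~\ref{thm:maxnoether} then yields $g=af+b\ell^2$ for some homogeneous $a,b\in\C[x,y,z]$, and the degree bound $\deg g=d-1<d=\deg f$ forces $a=0$, so $\ell^2\mid g$. The main obstacle is the Lagrange/determinant identity of the middle paragraph, which is what cleanly converts the ambient degeneracy $\det A_D\equiv 0$ into a constant-vector linear dependence among the $w_i$ that we can actually use to produce $\lambda$.
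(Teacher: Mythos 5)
Your proof is correct, and the core step (producing $g$ with $\ell\mid g$) takes a genuinely different route from the paper's. The paper chooses, for each $j$, a vector $\lambda_j$ in the common left kernel of the $d-1$ matrices $A_D(P_k)$, $k\neq j$; assembling $\Lambda=(\lambda_1\mid\cdots\mid\lambda_d)$, the off-diagonal entries of $\Lambda^TA_D\ol\Lambda$ vanish at all $P_i$ (hence on $\ell$), and $\det(\Lambda^TA_D\ol\Lambda)\equiv 0$ then forces some diagonal entry $\lambda_j^TA_D\ol{\lambda_j}$ to be divisible by $\ell$. You instead factor the restriction $B=A_D|_\ell$ by Lagrange interpolation through the rank-one decompositions $B(P_i)=\epsilon_i\ol{w_i}w_i^T$, obtaining $B=\ol W\Delta W^T$ with $\Delta=\mathrm{diag}(\epsilon_jL_j)$, so that $\det B=|\det W|^2\prod_j\epsilon_jL_j$ and the vanishing of $\det B$ is literally equivalent to $\det W=0$, producing a single $\lambda$ directly. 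This is a clean ``primal'' counterpart to the paper's ``dual'' construction and makes the structural reason for the divisibility transparent; it comes at the modest cost of having to be careful about degenerate rank-zero points (which you handle by allowing $w_i=0$) and about choosing an affine chart on $\ell$ for the interpolation. For the upgrade to $\ell^2\mid g$, the paper argues directly with intersection multiplicities (write $g=\ell h$; evenness of $I_{P_i}(f,g)$ and $I_{P_i}(f,\ell)=1$ force $h(P_i)=0$ and thus $\ell\mid h$), whereas you invoke Max Noether with the degree bound; both are fine, the paper's is a touch more elementary. One small point worth making explicit: if $g=\lambda^TA_D\ol\lambda\equiv 0$, the conclusion $\ell^2\mid g$ holds trivially, so the appeal to Lemma~\ref{lem:CD} (which tacitly treats nonzero elements as curves) is harmless.
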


\begin{proof} 
Suppose that $\det(M_D)$ is identically zero. 
From the proof of Theorem~\ref{thm:DetRepFromDixonProcess}(a), we see that 
$\det(A_D)$ is zero as well. 
First we show that there is some polynomial $g\in \sC_D$ divisible by $\ell$. 
Let $f.\ell=P_1+\cdots+P_d$. 
By construction of $A_D$, we have $\rank(A_D(P_j))\leq1$ for
  $j=1,\dots,d$, so the left kernel of each $A_D(P_j)$ has  dimension at least $d-1$. 
  Thus for each $j$, there is  a nonzero vector $\lambda_j$ contained in the intersection of the 
 left kernels of the $d-1$ matrices $\{A_D(P_k):k\neq j\}$. Since $A_D$ is Hermitian, 
 we see that $\ol{\lambda_j}$ is then in the right kernel of the matrix 
 $A_D(P_k)$ for all  $k\neq j$.

Let $\Lambda$ be the matrix $(\lambda_1 \hdots \lambda_d)$. 
Since $\det(A_D)\equiv 0$, we know that the determinant of the matrix $\Lambda^T A_D \overline{\Lambda}$ is
identically zero. 
Moreover, its off diagonal entries $\lambda_j^T A_D \overline{\lambda_k}$ for
 $j \neq k$ vanish at each of the points  $P_1, \hdots, P_d$. Because these entries 
 have degree $d-1$, they must vanish on the entire line $\sV_{\C}(\ell)$.  
 So modulo the ideal $(\ell)$ the matrix $\Lambda^T A_D \overline{\Lambda}$ 
 is diagonal. Because this matrix has determinant zero, we see that 
 $\ell$ must divide one of the diagonal entries, $\lambda_j^TA_D \ol{\lambda_j}$, 
 which is an element of  $\sC_D$.
 
 Now we claim that this element $g=\lambda_j^TA_D \ol{\lambda_j}$ must be divisible
 by $\ell^2$. We know that $g=\ell h$ for some $h\in \R[x,y,z]$. 
 Since $\sV_{\C}(g)$ is a curve of real contact of $\sV_{\C}(f)$ and the
 intersection multiplicity of $\ell$ and $f$ in each $P_j$ is equal to
  $1$, we must have $h(P_j)=0$ for $j=1,\dots,d$, so that $\ell$ divides $h$ and
  $\ell^2$ divides $g$.
\end{proof}

Using this characterization, we see that if the input divisor $D$ to Construction~\ref{constr:Dixon} 
comes from a curve interlacing $f$, then the resulting matrix $M_D$ is indeed a determinantal representation of $f$.

\begin{proof}[Proof of Theorem~\ref{thm:DetRepFromDixonProcess}(b)]
Suppose $a_{11}= e_1^TA_De_1$ interlaces $f$ with respect to $e$. 
Equation \eqref{eq:Hesse2} shows that for every $\lambda \in \C^d$,
the product $a_{11}\cdot(\lambda^TA_D\ol{\lambda})$ is non-negative 
on $\mathcal{V}_{\R}(f)$. Then, by Lemma~\ref{lem:nonnegInterlacers}, 
we see that $(\lambda^TA_D\ol{\lambda})$ interlaces $f$.
Since $\sV_{\C}(f)$ is smooth, $f$ is  square-free and every polynomial interlacing 
it must also be square-free. 
Hence every polynomial in $\sC_D$ is square-free. 
Since $f$ is hyperbolic, it satisfies the hypothesis of 
Proposition~\ref{prop:exceptional}, and thus
$\det(M_D)$ cannot be zero.

Now we prove that $M_D(e)$ is definite. To do this, we show that $A_D$ is 
the adjugate matrix of $M_D$. By construction, $M_D= f^{2-d}\cdot A_D^{\rm adj}$. 
Taking adjugates, we see that  
\[M_D^{\rm adj} \;\;=\;\; \frac{1}{f^{(d-2)(d-1)}}\cdot (A_D^{\rm adj})^{\rm adj}
\;\;=\;\;  \frac{1}{f^{(d-2)(d-1)}}\cdot\det(A_D)^{d-2}\cdot A_D
\;\;=\;\;c^{d-2}A_D,
\]
where $\det(A_D) = c f^{d-1}$ as in the proof of Theorem~\ref{thm:DetRepFromDixonProcess}(a).
Thus $a_{11}$ is a constant multiple of $e_1^TM_D^{\rm adj}e_1$ and belongs to $\sC(M_D)$. 
Since $a_{11}$ interlaces $f$ with respect to $e$, Theorem~\ref{thm:interlaceDetRep} 
implies that the matrix $M_D(e)$ is definite. 
\end{proof}

\begin{Cor} \label{cor:main}
  Every hyperbolic plane curve possesses a definite Hermitian
  determinantal representation.
\end{Cor}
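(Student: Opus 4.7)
The plan is to obtain the definite representation directly from Theorem~\ref{thm:DetRepFromDixonProcess} when $\sV_\C(f)$ is smooth, and then to handle the general case by approximating $f$ with smooth hyperbolic polynomials and invoking Lemma~\ref{lemma:DefDetClosed}.

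For the smooth case, I would take $a_{11} := D_ef$. By~\eqref{eq:RenDeriv}, $a_{11}$ interlaces $f$ with respect to $e$; and since $\sV_\C(f)$ has no real singular points, Proposition~\ref{prop:InterlacingRealContact} gives that $\sV_\C(a_{11})$ is a curve of real contact for $\sV_\C(f)$. I would then choose any divisor $D$ on $\sV_\C(f)$ with $f.a_{11}=D+\ol{D}$; since $\deg(f.a_{11})=d(d-1)$ by B\'ezout, $D$ has degree $d(d-1)/2$, the right input size for Construction~\ref{constr:Dixon}. Applying both parts of Theorem~\ref{thm:DetRepFromDixonProcess} then yields a matrix $M_D$ of linear forms with $\det(M_D)=\gamma f$ for some nonzero $\gamma\in\R$ and $M_D(e)$ definite; rescaling $M_D$ by a real constant (and switching sign if necessary) produces the sought Hermitian determinantal representation of $f$ that is positive definite at $e$.

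For a general hyperbolic $f$, I would normalize $f(e)=1$ and approximate $f$ by a sequence $f_k\to f$ in $\R[x,y,z]_d$, each $f_k$ hyperbolic with respect to $e$ satisfying $f_k(e)=1$ and with $\sV_\C(f_k)$ smooth. The smooth case gives each $f_k$ a definite Hermitian determinantal representation, and Lemma~\ref{lemma:DefDetClosed} then transfers the property to $f$ in the limit. Note that smoothness of $\sV_\C(f_k)$ automatically forces $f_k$ irreducible, so no separate reduction to the irreducible case is needed; reducible or non-reduced $f$ are handled transparently by the limit.

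The hard part will be justifying the existence of such smooth approximants: the set of hyperbolic polynomials with $f(e)=1$ is closed but not convex (sums of hyperbolic polynomials need not be hyperbolic, as one sees already from simple quadratic examples), so the density of smooth hyperbolic polynomials requires a genuine argument. My plan here would be a two-step perturbation. First, argue that strictly hyperbolic polynomials---those for which $f(te+a)$ has only simple roots for every $a\in\R^3$---are dense in the hyperbolic ones, noting that by Lemma~\ref{lemma:NonTouchingOvals} strict hyperbolicity already kills all real singularities of $\sV_\C(f)$. Second, observe that for a strictly hyperbolic $f$ the roots of $f(te+a)$ vary continuously with the coefficients of $f$ and with $a$ on the unit sphere, so a sufficiently small generic perturbation $f+\epsilon g$ remains hyperbolic while missing the discriminant hypersurface and hence acquiring smoothness. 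With this density in hand, Lemma~\ref{lemma:DefDetClosed} completes the proof.
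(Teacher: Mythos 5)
Your smooth case is exactly the paper's: take $a_{11}=D_ef$, use Proposition~\ref{prop:InterlacingRealContact} to get a real-contact divisor, and apply Theorem~\ref{thm:DetRepFromDixonProcess}; the remark that smooth plane curves are automatically irreducible, so the irreducibility hypotheses in Lemmas~\ref{lem:nonnegInterlacers}, \ref{lemma:minorsinrings}, etc.\ are satisfied, is correctly placed. The singular case also follows the paper's structure: normalize $f(e)=1$, approximate $f$ by smooth hyperbolic $f_k$ with $f_k(e)=1$, and invoke Lemma~\ref{lemma:DefDetClosed}.

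The gap is in how you propose to justify the approximation. The paper simply cites Nuij \cite{Nuij} for the existence of the sequence $(f_k)$. You instead offer a two-step perturbation argument, but the first step---``argue that strictly hyperbolic polynomials are dense in the hyperbolic ones''---is precisely the content of Nuij's theorem and is not an observation one gets for free. The set of hyperbolic polynomials with $f(e)=1$ is a closed nonconvex set whose interior consists of the strictly hyperbolic ones, and it is not obvious that every boundary point is a limit of interior points: for instance, $f=g^2$ with $g$ hyperbolic is far from strictly hyperbolic, and showing that arbitrarily small perturbations can split every multiple root along \emph{every} real line through $e$ simultaneously requires a genuine construction (Nuij's is repeated application of $f\mapsto f+sD_ef$). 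You state this density as something to ``argue'' without supplying that argument, so the proof as written is incomplete. (Your second perturbation step, from strictly hyperbolic to smooth-and-hyperbolic via a generic small perturbation off the discriminant, is fine, since strict hyperbolicity is an open condition and the discriminant is a proper algebraic hypersurface; and also note that one does not need Lemma~\ref{lemma:NonTouchingOvals} to see that strict hyperbolicity excludes real singular points---if $\nabla f(p)=0$ for real $p$, then $f(te+p)$ vanishes to order $\ge 2$ at $t=0$ directly by Euler's relation.) Either cite Nuij as the paper does, or actually carry out his construction.
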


\begin{proof}
Suppose $f\in \R[x,y,z]_d$ is hyperbolic with respect to $e$ and 
$\sV_{\C}(f)$ is smooth. Then the polynomial
$D_ef$ of \eqref{eq:RenDeriv} interlaces $f$. 
By Proposition~\ref{prop:InterlacingRealContact}, $D_ef$ is a curve of real 
contact to $\sV_{\C}(f)$. Thus, by Theorem~\ref{thm:DetRepFromDixonProcess}, 
using any real-contact divisor coming from $D_ef$ as the 
input for Construction~\ref{constr:Dixon} will result in a 
definite determinantal representation of $f$. 

  Now all that remains is to address singular hyperbolic curves.  Let
  $f\in\R[x,y,z]_d$ be hyperbolic with respect to $e$ with
  $f(e)=1$. By Nuij \cite{Nuij}, there exists a sequence of
  polynomials $(f_k)\subset\R[x,y,z]_d$ converging to $f$ such that for all
  $k$, $f_k$ is hyperbolic with respect to $e$, $f_k(e)=1$, and
  $\sV_{\C}(f_k)$ is smooth. Now each $f_k$ has a Hermitian 
  determinantal representation, hence so does $f$ by Lemma \ref{lemma:DefDetClosed}.
\end{proof}

\begin{Remark}
  One can analyze the relation between real-contact divisors and
  Hermitian determinantal representations more precisely than we have
  done here: If
  $f=\det(M)$ is a Hermitian determinantal representation with
  corresponding real-contact divisor $D$, then the matrix $M_D$
  is Hermite-equivalent to $M$, which means that there exists
  $U\in\GL_d(\C)$ such that $M=U^TM_D\ol{U}$. Furthermore, if two
  Hermitian determinantal representations are Hermite-equivalent, the
  associated real-contact divisors are linearly
  equivalent. Conversely, if $D$ and $D'$ are two linearly equivalent
  real-contact divisors, then $M_D$ is Hermite-equivalent to either
  $M_{D'}$ or $-M_{D'}$.
  For a more detailed discussion, see \cite[Thm.~8]{MR1024486}.
\end{Remark}

\noindent Finally, let us see Dixon's construction in action.

\begin{Example}\label{ex:bigOne}
Here we apply Construction~\ref{constr:Dixon} to the quartic 
\begin{equation}\label{eq:exQuartic}
f(x,y,z) \;\;=\;\; x^4 - 4x^2y^2 + y^4 - 4x^2z^2 - 2y^2z^2 + z^4,
\end{equation}
which is hyperbolic with respect to the point $e=[1:0:0]$.
This curve has two nodes, $[0:1:1]$ and $[0:-1:1]$,
but Dixon's construction will still work. Figure~\ref{fig:Lemma} and Figure~\ref{fig:exampleQuartic} 
show the real curve in the planes $\{z=1\}$ and $\{x=1\}$, respectively.

 \begin{figure}[h]
 \includegraphics[width=1.8in]{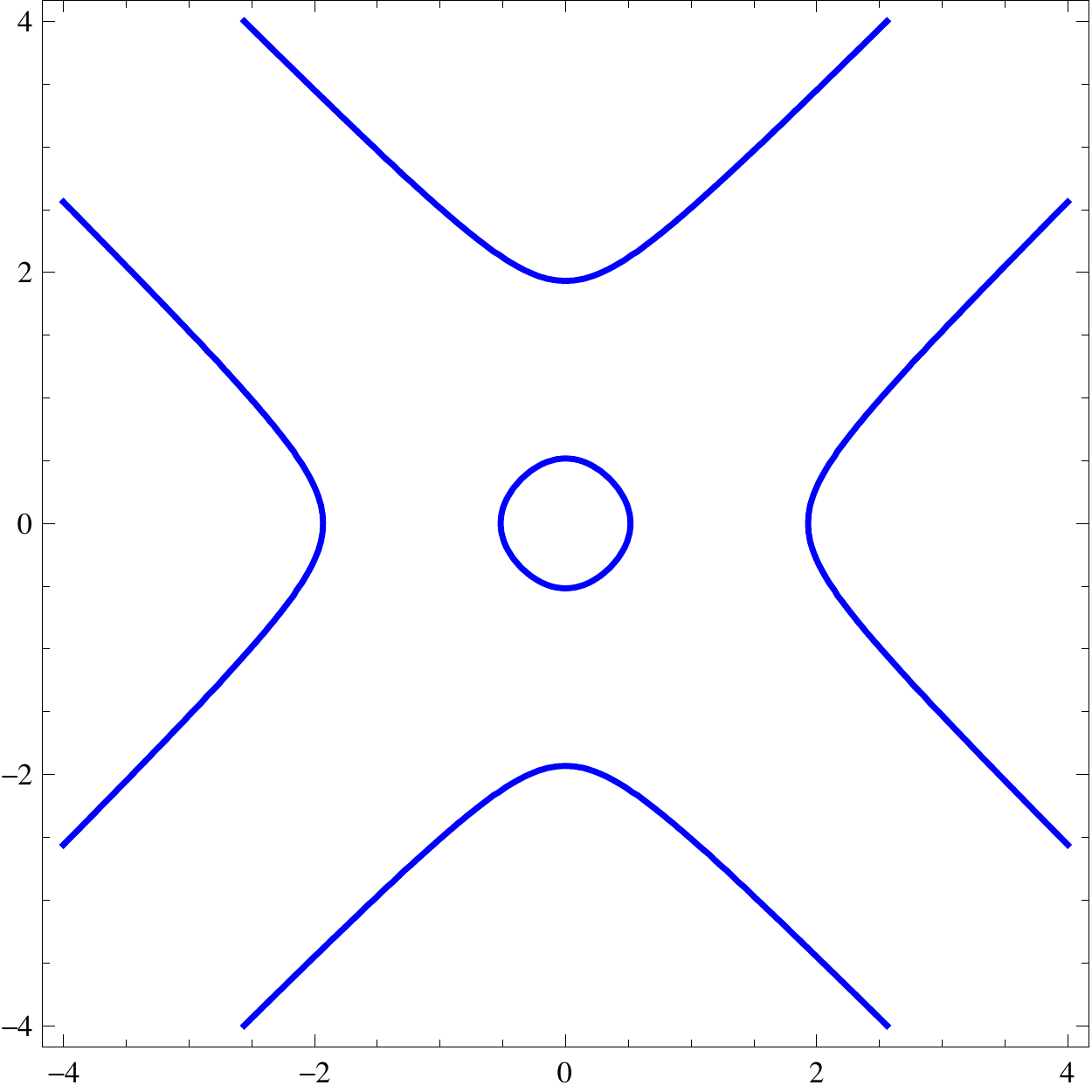} \quad \quad \quad
  \includegraphics[width=1.8in]{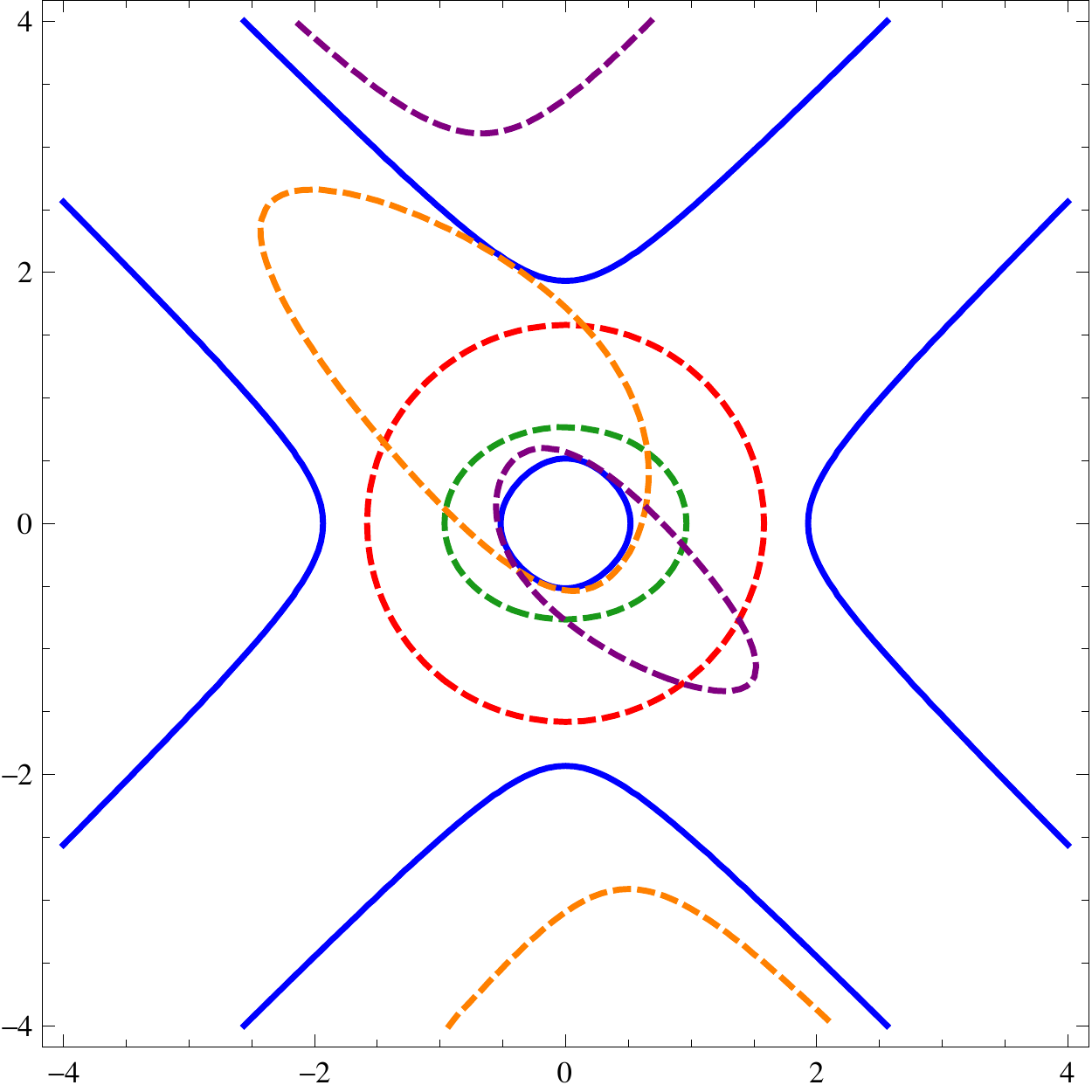} 
\caption{The hyperbolic quartic \eqref{eq:exQuartic} and interlacing cubics from $\sC_D$. }
\label{fig:exampleQuartic}
\end{figure}

First we define $a_{11}$ to be the directional derivative 
$\frac{1}{4} D_ef = x^3 - 2 x y^2 - 2 x z^2$. 
The intersection divisor of $f$ and $a_{11}$ is the sum of the eight points 
$[2:\pm\sqrt{3}:\pm i]$, $[2:\pm i:\pm\sqrt{3}]$ and the two nodes, $[0:\pm1:1]$, 
each with multiplicity 2.  By making some arbitrary choices, we can divide 
these points into two conjugate sets and write the divisor $f.a_{11}$ as
$D+\overline{D}$ where
\[ D  =  [0:1:1] +  [0:-1:1] +[2:\sqrt{3}:i]+[2:-\sqrt{3}:i]+[2:i:\sqrt{3}]+[2:i:-\sqrt{3}].\]
The vector space of cubics in $\C[x,y,z]$ vanishing on these six points 
is four dimensional and we extend $a_{11}$ to a basis 
$\{a_{11},a_{12},a_{13}, a_{14}\}$ for this space, where\
\begin{align*}
a_{12} &= i x^3 + 4 i x y^2 - 4 x^2 z - 4 y^2 z + 4 z^3,\\ 
a_{13} &= -3 i x^3 + 4 x^2 y +  4 i x y^2 - 4 y^3 + 4 y z^2,\\
a_{14} &=-x^3 - 2 i x^2 y - 2 i x^2 z +  4 x y z.
\end{align*}
Then, to find $a_{22}$ for example, we write $a_{12}\cdot \overline{a_{12}}$ as an element
of the ideal $(f,a_{11})$, 
\[a_{12}\cdot \overline{a_{12}} \;=\;(13 x^3 - 14 x y^2 - 22 x z^2)\cdot a_{11}+(16 z^2-12 x^2)\cdot f,\]
and set $a_{22}=13 x^3 - 14 x y^2 - 22 x z^2$.
Similarly  for other $2\leq j \leq k \leq 4$, we find $a_{jk}$ by 
writing $a_{ik}\cdot \ol{a_{1j}}$ as an element of $(f,a_{11})$.  
The output of Construction~\ref{constr:Dixon} is then the Hermitian matrix of cubics
\[ A_{D} = \begin{pmatrix} a_{11} & a_{12} & a_{13} & a_{14} \\ 
\overline{a_{12}} & a_{22} & a_{23} & a_{24} \\
\overline{a_{13}} & \overline{a_{23}}& a_{33} & a_{34}\\
\overline{a_{14}} &\overline{a_{24}} &\overline{a_{34}} &a_{44}
 \end{pmatrix}.\]
 By taking the adjugate of $A_D$ and dividing by $f^2$, we find the desired 
 Hermitian determinantal representation,
\[ M_{D} \;\;=\;\; \frac{1}{f^2} \cdot A_{D}^{\rm adj} \;\;=\;\; 2^5 
 \begin{pmatrix} 
14 x& 2 z& 2 i x - 2 y& 2 i (y - z)\\ 
2 z& x&   0& -i x + 2 y\\ -2 i x - 2 y& 0& x& i x - 2 z\\ 
-2 i (y - z)&   i x + 2 y& -i x - 2 z& 4 x\end{pmatrix}.
 \]
The determinant of $M_D$ is $2^{24}\cdot f$. As promised by Theorems~\ref{thm:interlaceDetRep} and \ref{thm:DetRepFromDixonProcess}, 
the cubics in $\sC_D=\sC(M_D)$ interlace $f$ (see Figure~\ref{fig:exampleQuartic}) 
and the matrix $M_D$ is positive definite
at the point $(x,y,z)=(1,0,0)$.
\end{Example}

In general, the challenge of carrying through Construction~\ref{constr:Dixon} 
in exact arithmetic is the computation of the intersection points. 
By contrast, computing
a symmetric determinantal representation from a given contact curve is
much simpler, but it may be very difficult to find
a suitable curve to start from. For further algorithmic results, especially in the
case of quartics, see \cite{quartics} and \cite{us2}. Numerical computations
seem more promising and we plan to pursue this in a future project.

\section{Hyperbolicity cones and spectrahedra}\label{sec:spect}

For a polynomial $f$ that is hyperbolic with respect to $e\in \R^{n+1}$, 
the connected component of $e$ in the complement of the hypersurface $\sV_{\R}(f)$ 
plays a special role. This is a \emph{hyperbolicity cone}, denoted $C(f,e)$ 
and can also be defined as
\[ C(f,e) = \{ a\in \R^{n+1} \;:\; f(te-a) \neq 0 \;\text{ when }\; t\leq 0\}.\]
As shown in G\r{a}rding \cite{MR0113978}, $C(f,e)$ is a convex cone and $f$ is
hyperbolic with respect to any point contained in it. 

A \emph {hyperbolic program} is the problem of optimizing a linear function 
over an affine slice of a hyperbolicity cone. Hyperbolic programming
is a generalization of semidefinite programming, the problem of optimizing a linear function over an affine slice of 
the cone of positive semidefinite symmetric matrices. Such convex bodies are called \emph{spectrahedra}.
Because the determinant is a hyperbolic polynomial on the space of real symmetric matrices, we see that
every spectrahedral cone is indeed a hyperbolicity cone. A major open question is whether or not the converse holds. 
\medskip

\noindent {\bf Generalized Lax Conjecture.} Every hyperbolicity cone is a spectrahedron.
\medskip

Showing that a hyperbolicity cone $C(f,e)$ is spectrahedral amounts to
finding a definite real symmetric determinantal representation for $f$
(or for an appropriate multiple of $f$).  For a detailed discussion, see
\cite[Conjecture 3.3]{Vin}. The work of Helton-Vinnikov \cite{HV}
settled this for three dimensional hyperbolicity cones by showing that
every hyperbolic polynomial in three variables has a definite symmetric
determinantal representation. We conclude by noting that one can obtain the same result
from the existence of definite Hermitian determinantal
representations.
  
\begin{Cor}
  Every three-dimensional hyperbolicity cone is a spectrahedron.\end{Cor}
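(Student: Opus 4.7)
The plan is to pass from a definite Hermitian determinantal representation, whose existence is guaranteed by Corollary~\ref{cor:main}, to a definite real symmetric determinantal representation of $f^2$ by the standard realification trick, and then observe that the hyperbolicity cone of $f$ coincides with the spectrahedron cut out by the resulting real symmetric pencil.

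First, let a three-dimensional hyperbolicity cone be given as $C(f,e)$ for some $f\in\R[x,y,z]_d$ hyperbolic with respect to $e$. By Corollary~\ref{cor:main}, after replacing $f$ by $-f$ if necessary, we may write $f=\det(M)$ where $M(x,y,z)=xM_0+yM_1+zM_2$ with Hermitian matrices $M_j$ and $M(e)\succ 0$. Write each $M_j=A_j+iB_j$ with $A_j$ real symmetric and $B_j$ real skew-symmetric, set $A(x,y,z)=\sum_j x_jA_j$ and $B(x,y,z)=\sum_j x_jB_j$, and form the real symmetric pencil of double size
\[
N(x,y,z)\;=\;\begin{pmatrix} A(x,y,z) & -B(x,y,z) \\ B(x,y,z) & A(x,y,z) \end{pmatrix}.
\]

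Next I would verify the two standard identities. For the determinant, the matrix $N(a)$ is the realification of the $\C$-linear operator $M(a)$, so $\det(N(a))=\lvert\det_\C(M(a))\rvert^2=f(a)^2$ (the last equality because $\det(M)$ is real-valued, being the determinant of a Hermitian matrix). For positive (semi)definiteness, a short computation shows that
\[
(u,v)^T N(a) \begin{pmatrix}u\\v\end{pmatrix}\;=\;(u+iv)^*\,M(a)\,(u+iv)
\]
for all $u,v\in\R^d$, so $N(a)\succeq 0$ if and only if $M(a)\succeq 0$, and similarly for strict definiteness. In particular $N(e)\succ 0$ and $\det(N)=f^2$.

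Finally I would conclude by identifying the hyperbolicity cone with a spectrahedron. Since $f$ and $f^2$ have the same real zero locus and are both hyperbolic with respect to $e$, we have $C(f,e)=C(f^2,e)$. For any real symmetric linear pencil $N$ with $N(e)\succ 0$ and $\det(N)$ equal to a (power of a) hyperbolic polynomial, the standard eigenvalue-continuity argument shows that
\[
\{a\in\R^3\::\:N(a)\succeq 0\}\;=\;C(\det(N),e):
\]
moving continuously from $e$, the eigenvalues of $N(a)$ stay positive until they first hit zero, which happens precisely on the boundary of the connected component of $e$ in the complement of $\sV_\R(\det(N))$. Applying this to our $N$, we obtain $C(f,e)=\{a\in\R^3:N(a)\succeq 0\}$, exhibiting the hyperbolicity cone as a spectrahedron.

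There is no real obstacle here; the content is all in Corollary~\ref{cor:main}. The only point that requires any care is the verification of the determinant and definiteness identities for the block matrix $N$, and the standard observation that the connected component of $e$ in the locus $\det(N)\neq 0$ coincides with the positive-definite locus of $N$.
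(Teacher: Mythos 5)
Your proof is correct and takes essentially the same approach as the paper: you realify the Hermitian pencil $M=A+iB$ into a real symmetric pencil $N$ of twice the size, check $\det(N)=f^2$ and $N(e)\succ 0$, and identify the hyperbolicity cone $C(f,e)=C(f^2,e)$ with the spectrahedron of $N$. The only superficial differences are the sign placement of the $B$-blocks (your $N$ is congruent to the paper's via $\operatorname{diag}(I,-I)$) and that you verify the determinant and definiteness identities by a direct quadratic-form calculation rather than the paper's explicit unitary block-diagonalization.
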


\begin{proof}
  Let $f\in\R[x,y,z]_d$ be hyperbolic with respect to $e$. By 
  Corollary~\ref{cor:main}, $f$ admits a definite Hermitian determinantal
  representation $f=\det(M)$. We can write $M=A+iB$, where $A$ is real
  symmetric and $B$ is real skew-symmetric, and define $N$ to be the real symmetric matrix 
 \[N \;\;=\;\; \begin{bmatrix}
     A & B\\
     -B & A
  \end{bmatrix}.\]
By the change of coordinates,
\[U^TN\overline{U} =   
\begin{bmatrix}
     A-iB & 0\\
     0 & A+iB
  \end{bmatrix}
\;\;\;\;\; \text{where} \;\;\;\; 
U = 
\begin{bmatrix}
     \frac{1}{\sqrt{2}}\cdot I & \frac{i}{\sqrt{2}}\cdot I \\
      \frac{i}{\sqrt{2}}\cdot I  &  \frac{1}{\sqrt{2}}\cdot I 
  \end{bmatrix},
\]
  we see that $\det(N)=
  \det(\overline{M})\det(M) = f^2$.
The hyperbolicity cone of $f$ is the same as that of $f^2$
(with respect to $e$), which is the spectrahedron
described by $N$.
\end{proof}

\vspace{1cm}

\end{document}